\theoremstyle{plain}
\newtheorem{theorem}{Theorem}
\newtheorem{proposition}[theorem]{Proposition}
\theoremstyle{definition}
\newtheorem{definition}[theorem]{Definition}
\newtheorem{problem}[theorem]{Problem}
\newtheorem{remark}[theorem]{Remark}
\newtheorem{example}[theorem]{Example}
\def\shf{\mathcal}
\def\col{\mathcal}
\def\id{\textrm{id}}
\def\pr{\textrm{pr}}
\title{Sheaves are the canonical data structure for sensor integration}
\author{Michael Robinson\\
Mathematics and Statistics\\
American University\\
Washington, DC, USA\\
michaelr@american.edu}
\begin{document}
\maketitle
\begin{abstract}
A sensor integration framework should be sufficiently general to
accurately represent many sensor modalities, and also be able to
summarize information in a faithful way that emphasizes important,
actionable information.  Few approaches adequately address these two
discordant requirements.  The purpose of this expository paper is to
explain why sheaves are the canonical data structure for sensor
integration and how the mathematics of sheaves satisfies our two
requirements.  We outline some of the powerful inferential tools that
are not available to other representational frameworks.
\end{abstract}
\section{Introduction}

There is increasing concern within the data processing community about
``swimming in sensors and drowning in data,'' \cite{Magnuson_2010} because unifying data across many disparate sources is difficult.  This refrain is repeated
throughout many scientific disciplines, because there are few
treatments of unified models for complex phenomena and it is
difficult to infer these models from heterogeneous data.  

A sensor integration framework should be (1) sufficiently general to accurately
represent all sensors of interest, and also (2) be able to summarize
information in a faithful way that emphasizes important, actionable
information.  Few approaches adequately address these two discordant
requirements.  Models of specific phenomena fail when multiple sensor types are combined into a complex network, because they cannot assemble a global picture consistently.  Bayesian or network theory
tolerate more sensor types, but suffer a severe lack of sophisticated
analytic tools.

The mathematics of \emph{sheaves} partially addresses our two requirements and
provides several powerful inferential tools that are not available to other representational frameworks.  This article presents (1) a sensor-agnostic measure of data consistency, (2) a sensor-agnostic optimization method to fuse data, and (3) sensor-agnostic detection of possible ``systemic blind spots.''  In this article, we show that sheaves provide both theoretical and practical tools to allow representations of locally valid datasets in which the datatypes and contexts vary.  Sheaves therefore provide a common, canonical language for heterogeneous datasets.  We show that sheaf-based fusion methods can combine disparate sensor modalities to dramatically improve target localization accuracy over traditional methods in a series of examples, culminating in Example \ref{eg:sar_data}.  Sheaves provide a sensor-agnostic basis for identifying when information may be inadvertently lost through processing, which we demonstrate computationally in Examples \ref{eg:h1_generator} -- \ref{eg:obstacle_sheaf}.

 Other methods typically aggregate data either exclusively globally (on the level of whole semantic ontologies) or exclusively locally (through various maximum likelihood stages).  This limits the kind of inferences that can be made by these approaches.  For instance, the data association problem in tracking frustrates local approaches (such as those based on optimal state estimation) and remains essentially unsolved in the general case.  The analysis of sheaves avoids both of these extremes by specifying information where it occurs -- locally at each data source -- and then uses global relationships to constrain how these data are interpreted.

The foundational and canonical nature of sheaves means that existing approaches that address aspects of the sensor integration problem space already illuminate some portion of sheaf theory without exploiting its full potential.  In contrast to the generality that is naturally present in sheaves, existing approaches to combining heterogeneous quantitative and qualitative data tend to rely on specific domain knowledge on small collections of data sources.  Even in the most limited setting of pairs quantitative data sources, considerable effort has been expended in developing models of their joint behavior.  Our approach leverages these pairwise models into models of multiple-source interaction.  Additionally, where joint models are not yet available, sheaf theory provides a context for understanding the properties that such a model must have.

\subsection{Contribution}

Sometimes data fusion is described methodologically, as it is in the Joint Directors of Laboratories (JDL) model \cite{Hall_2008,White_1991}, which defines operationally-relevant ``levels'' of data fusion.  A more permissive -- and more theoretically useful -- definition was given by Wald \cite{Wald_1999}, ``data fusion is a formal framework in which are expressed the means and tools for the alliance of data originating from different sources.''  This article shows that this is \emph{precisely} what is provided by sheaf theory, by carefully outlining the requisite background in sheaves alongside detailed examples of collections of sensors and their data.  In Chapter 3 of Wald's book \cite{wald2002data}, the description of the implications of this definition is strikingly similar to what is presented in this article.  But we go further, showing that we obtain not just a full-fidelity representation of the data and its internal relationships, but also a jumping-off point for analysis of both the integrated sensor system and the data it produces.

\begin{figure}
\begin{center}
\includegraphics[width=3in]{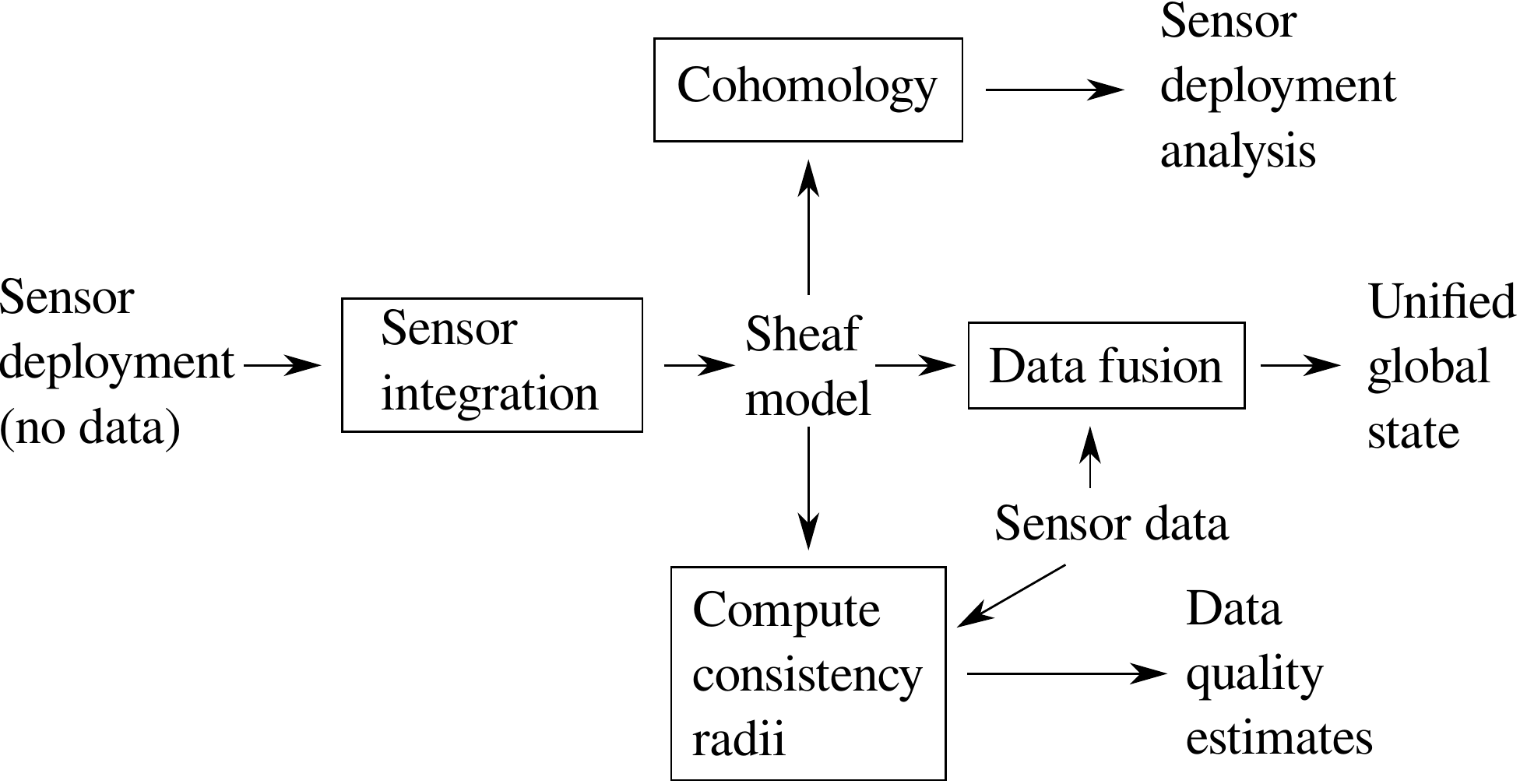}
\caption{The sheaf-based multi-sensor workflow.}
\label{fig:workflow}
\end{center}
\end{figure}

We make the distinction between ``sensor integration'' in which a collection of \emph{models} of sensors are combined into a single unified model (the \emph{integrated sensor system}), and ``sensor data fusion'' in which \emph{observations} from those sensors are combined.  This article presents the sheaf-based workflow outlined by Figure \ref{fig:workflow}, which divides the process of working with sensors and their data into several distinct stages:
\begin{enumerate}
\item \emph{Sensor integration} unifies models of the individual sensors and their inter-relations into a single system model, which we will show has the mathematical structure of a \emph{sheaf}.  We emphasize that no sensor \emph{data} are included in the sensor system model represented by the sheaf. 
\item \emph{Consistency radius computation} uses the sheaf to quantify the level of self-consistency of a set of data supplied by the sensors.
\item \emph{Data fusion} takes the sheaf and data from the sensors to obtain a new dataset that is consistent across the sensor system through a constrained optimization.  The consistency radius of the original data places a lower bound on the amount of distortion incurred by the fusion process.
\item \emph{Cohomology} of the sheaf detects possible problems that could arise within the data fusion process.  Although cohomology does not make use of any sensor data, it quantifies the possible impact that certain aspects of the data may have on the fusion process.
\end{enumerate}

Through a mixture of theory and detailed examples, we will show how this workflow presents solutions to four distinct problem domains:
\begin{enumerate}
\item Formalizing the description of an integrated sensor system as a well-defined mathematical entity (defined by Axioms 1--6 in Section \ref{sec:sheafification}),
\item Quantifying the internal consistency of the data from individual sensors (the \emph{consistency radius} in Definition \ref{def:approx_section_topo} in Section \ref{sec:fusion}),
\item Deriving globally consistent data from the data provided by the entire collection of sensors (solving Problem \ref{prob:fusion}, \emph{sheaf-based data fusion} in Section \ref{sec:fusion}), and
\item Measuring the impact of the relationships between sensors on what data will be deemed consistent (using \emph{cohomology} in Section \ref{sec:cech}).
\end{enumerate}

\subsection{Historical context}

There are essentially two threads of research in the literature on data fusion: (1) physical sensors that return structured, numerical data (``hard'' fusion), and (2) unstructured, semantic data (``soft'' fusion) \cite{khaleghi2013multisensor}.  Hard fusion is generally performed on spatially-referenced data (for example, images), while soft fusion is generally referenced to a common ontology.  Especially in hard fusion, the spatial references are taken to be absolute.  Most \emph{sensor} fusion is performed on a pixel-by-pixel basis amongst sensors of the same physical modality (for instance \cite{varshney1997multisensor,alparone2008multispectral,Zhang_2010}).  It generally requires image registration \cite{Dawn_2010} as a precondition, especially if the sensors have different modalities (for instance \cite{Koetz_2007,Guo_2008}).  When image extents overlap but are not coincident, \emph{mosaics} are the resulting fused product.  These are typically based on pixel- or patch-matching methods (for instance \cite{Chen_2014}).  Because these methods look for areas of close agreement, they are inherently sensitive to differences in physical modality.  It can be difficult to extend these ideas to heterogeneous collections of sensors.

Like hard fusion, soft fusion requires registration amongst different data sources.  However, since there is no physically-apparent ``coordinate system,'' soft fusion must proceed without one.  There are a number of approaches to align disparate ontologies into a common ontology \cite{EuJShP07,EuJFeA09,JoCPaP09a,little2009designing}, against which analysis can proceed.  There, most of the analysis derives from a combination of tools from natural language processing (for instance \cite{hammer1997extracting,riloff1999learning,gregory2011domain}) and artificial intelligence (like the methods discussed in \cite{kushmerick2000wrapper,jakobson2004approach,sarawagi2008information}).  That these approaches derive from theoretical logic, type theory, and relational algebras is indicative of deeper mathematical foundations.  These three topics have roots in the same category theoretic machinery that drives the sheaf theory discussed in this article. 

Weaving the two threads of hard and soft fusion is difficult at best, and is usually approached statistically, as discussed in \cite{waltz1990multisensor,DeGroot_2004,hall2004mathematical}.  Unfortunately, this ``clouds'' the issue.  If a stochastic data source is combined with another data source (deterministic or not), stochastic analysis asserts that the result will be stochastic.  This viewpoint is quite effective in multi-target tracking \cite{smith2006approaches,Newman_2013} or in event detection from social media feeds \cite{alqhtani9multimedia}, where there are sufficient data to estimate probability distributions.  But if two \emph{deterministic} data sources are combined, one numeric and one textual, why should the result be \emph{stochastic}?

Regardless of this conundrum, information theoretic or possibilistic approaches seem to be natural and popular candidates for performing sensor integration, for instance \cite{crowley1993principles,benferhat2006reasoning, benferhat2009fusion}.  They are actually subsumed by category theory \cite{Balduzzi_2011,Thorsen_2006} and arise naturally when needed.  These models tend to rely on the homogeneity of sensors in order to obtain strong theoretical results.  Sheaf theory extends the reach of these methods by explaining that the most robust aspects of networks tend to be topological in nature.  For example, one of the strengths of Bayesian methods is that strong convergence guarantees are available.  However, when applied to data sources arranged in a feedback loop, Bayesian updates can converge to the wrong distribution or not at all! \cite{murphy1999loopy}  The fact that this is possible is witnessed by the presence of a topological feature -- a loop -- in the relationships between sources.

Sheaf theory provides canonical computational tools that sit on the general framework and has been occasionally \cite{Lilius_1993,RobinsonQGTopo,Joslyn_2014} used in applications.  Unfortunately, it is often difficult to get the general machine into a systematic computational tool.  Combinatorial advances \cite{Baclawski_1975,Baclawski_1977,Shepard_1985,Curry} have enabled the approach discussed in this article through the use of finite topologies and related structures.

Sheaves, and their cohomological summaries, are best thought of as being part of \emph{category theory}.  There has been a continuing undercurrent that sheaves aspire to be more than that -- they provide the bridge between categories and logic \cite{Goldblatt}.  The pioneering work by Goguen \cite{goguen1992sheaf} took this connection seriously, and inspired this paper -- our axioms sets are quite similar -- though the focus of this paper is somewhat different.  The approach taken in this paper is that time may be implicit or explicit in the description of a data source; for Goguen, time is always explicit.  More recently category theory has become a valuable tool for data scientists, for instance \cite{Spivak_2014}, and information fusion \cite{Kokar_2004, Kokar_2006}.

\section{Motivating examples}

Although the methodology presented in this article is sufficiently general to handle many configurations of sensors and data types, we will focus on two simple scenarios for expository purposes.  To emphasize that our approach is applicable for many sensor modalities, the two scenarios involve rather different signal models.

\begin{enumerate}
\item A notional search and rescue scenario in which multiple types of sensors are used to locate a downed airliner.
\item A computer vision task involving the tabulation of coins on a table.
\end{enumerate}

We will carry these two scenarios throughout the article.  We will start with the most basic stage of constructing their integrated sensor models in Section \ref{sec:sheafification}, we will then analyze typical sensor data in Section \ref{sec:fusion}, and finally offer a data-independent analysis of the sensor models in Section \ref{sec:cech}.

\subsection{Search and rescue scenario}
The search and rescue scenario involves three distinct kinds of sensors (this is simplified from the guidelines produced by the Australian government \cite{SAR_2014} for the search for Malaysia Airlines Flight MH370) distributed\footnote{The coastline data was obtained from {\tt https://serc.carleton.edu/download/files/79176/coast.mat}} according to Figure \ref{fig:sar_setup}.
\begin{enumerate}
\item The original flight plan, Air Traffic Control (ATC) radars, and the Automatic Dependent Surveillance-Broadcast (ADS-B) aircraft beacon system, all of which produce time-stamped locations of an aircraft,
\item Radio Direction Finding (RDF) sensors provided by amateur radio operators, which produce accurately time-stamped but highly uncertain bearings to an emergency beacon deployed by the aircraft near the crash site, and
\item Wide-area satellite images, which are time-stamped georeferenced intensity-valued images.
\end{enumerate}

\begin{figure}
\begin{center}
\includegraphics[width=4in]{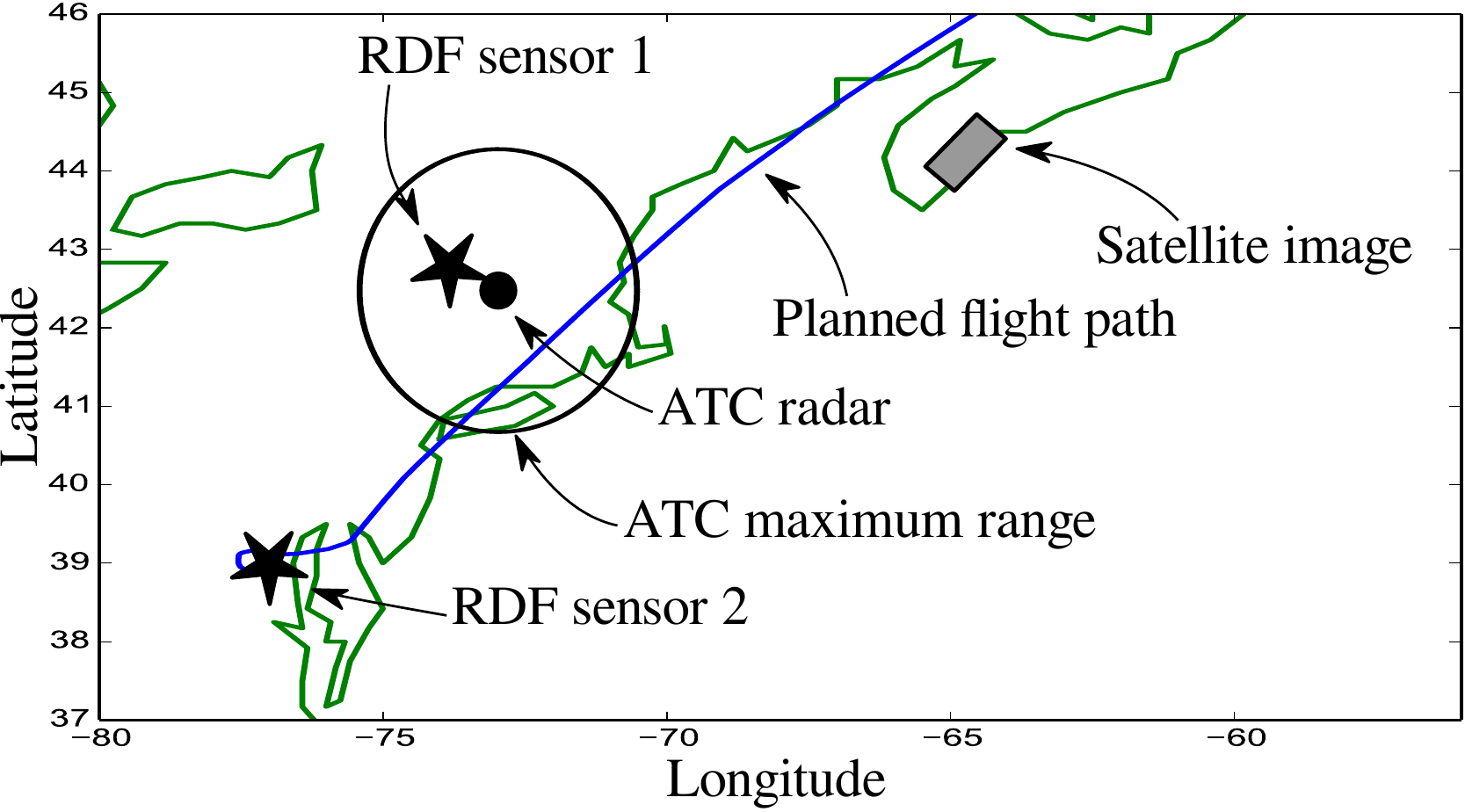}
\caption{The sensors used in the search and rescue scenario for this article.}
\label{fig:sar_setup}
\end{center}
\end{figure}

The data consists of 612 ATC and ADS-B observations.  Of these, 505 observations contained latitude, longitude, altitude and speed, while the other 107 consisted of latitude and longitude only.  The update rate varied considerably over the data, but typical observations were about 10 km apart.  The RDF bearings and satellite images each consisted of exactly one observation each.  Since the data were simulated, various amounts of noise and systematic biases were applied, as detailed in Example \ref{eg:sar_data}. 

The goal of the scenario is to locate the airliner's crash site.  Due to the fact that the most accurate wide-area sensors -- the ATC radars and ADS-B beacons -- do not necessarily report an aircraft's location over the water, this problem is limited by uncertainty about what happened after the airliner leaves controlled airspace.  The time of the crash is (of course!) not part of the flight plan, and was not visible to the ATC radars nor to the satellite.

The time of crash is central to the problem because it connects the time of crash, the last known position, and last known velocity to an estimate of where the crash location was.  When corroborated by the intersection of the RDF bearings, the crash location estimate can be used to task the satellite.  Thus, the problem's solution is to combine the bearings and time-stamps from the RDF sensors with the other sensors, a task performed by the ``field office.''  

\subsection{Computer vision scenario}

In the usual formulation of multi-camera mosaic formation (for instance \cite{varshney1997multisensor,alparone2008multispectral}), several cameras with similar perspectives are focused on a scene.  Assuming that the perspectives and lighting conditions are similar enough, large mosaics can be formed.  The problem is more complicated if the lighting or perspectives differ substantially, but it can still be addressed.  In either case, the methodology aims to align \emph{pixels} or \emph{regions of pixels}.  If some of the images are produced by sensors with different modalities -- say a LIDAR and a RADAR -- pixel alignment is doomed to fail.  Then, the problem is fully heterogeneous, and is usually handled by trying to interpolate into a common coordinate system \cite{Koetz_2007,Guo_2008}.  If there is insufficient sample support or perspectives are too different, then it is necessary to match object-level summaries rather than pixel regions \cite{Zhang_2010}.

\begin{figure}
\begin{center}
\includegraphics[width=5in]{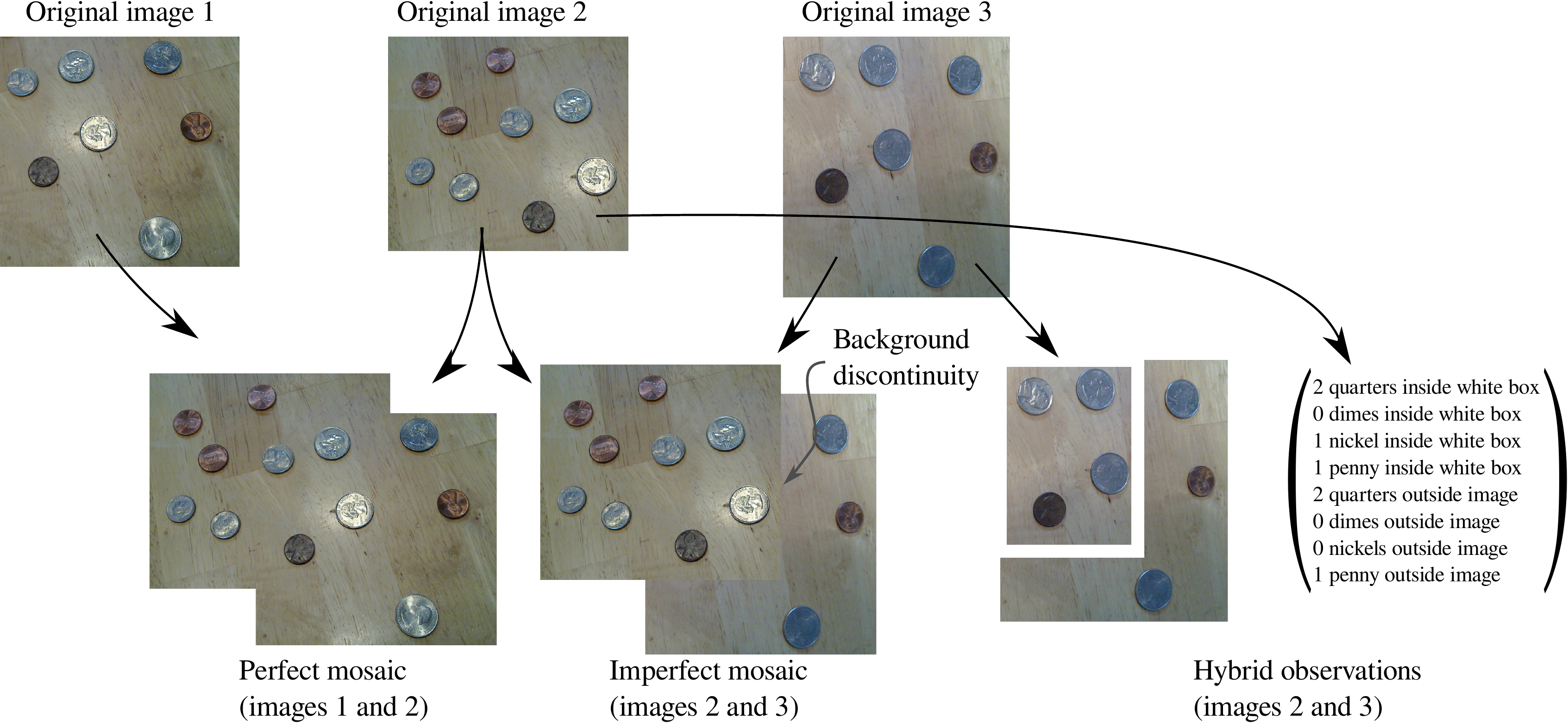}
\caption{Several mosaics (bottom row) of three photographs (top row) under different conditions.  The mosaic on the left is perfect, in that the subimages on the overlap have identical pixel values.  The mosaic in the middle is imperfect, in that the pixel values of the two subimages differ somewhat on the overlap.  The mosaic on the right combines textual and numeric data derived from an object detector run on image 2 with the original image 3.}
\label{fig:mosaics}
\end{center}
\end{figure}

This example studies the task of counting coins on a table.  We will introduce heterogeneity in the imaging modality through different kinds of processing:
\begin{enumerate}
\item All cameras yield images, with identical lighting and perspective: perfect mosaics can be formed (as shown in the lower left mosaic in Figure \ref{fig:mosaics}),
\item All cameras yield images, though lighting and perspective can differ: imperfect mosaics can be formed (as shown in the lower middle mosaic in Figure \ref{fig:mosaics}), and
\item A mixture of camera images and object-level detections derived from the images: a fully heterogeneous data fusion problem (as shown in the lower right mosaic in Figure \ref{fig:mosaics}).
\end{enumerate}

These three examples showcase the transition from homogeneous data in problem (1) to heterogeneous data in problem (3), with homogeneous data being a clear special case.  These examples highlight how fusion of heterogeneous data is to be performed, and makes the resulting structure-preserving summaries easy to interpret.  

\section{Sensor integration}
\label{sec:sheafification}

The first stage of our workflow constructs a unified model of sensors and their interactions.  To emphasize the generality of our approach from the outset -- our methodology is not limited to any particular kind of sensor -- we posit a set of six Axioms to specify what this integrated system must be.  A \emph{sheaf} is precisely any mathematical object satisfying all six Axioms.  We motivate the Axioms by way of examples from our two scenarios, a process that also demonstrates how the Axioms apply for other systems as well.

\begin{description}
\item[Axiom 1:] The entities observed by the sensors lie in a set $X$.
\end{description}

The first Axiom is not controversial.  ``Entities'' might be targets, sensor data, or even textual documents.  

The second Axiom considers properties of the attributes.  In order to quantify differences between attributes, we will use a \emph{pseudometric}.

\begin{definition}
If $A$ is a set, function $d: A \times A \to \mathbb{R}$ is called a \emph{pseudometric} when it satisfies the following:
\begin{enumerate}
\item $d(x,x) = 0$ for all $x\in A$, 
\item $d(x,y) = d(y,x) \ge 0$ for all $x,y \in A$, and
\item $d(x,z) \le d(x,y) + d(y,z)$ for all $x,y,z \in A$.
\end{enumerate}
We call the pair $(A,d)$ a \emph{pseudometric space}.

If additionally, $d(x,y) > 0$ whenever $x \not= y$, we call $d$ a \emph{metric} and $(A,d)$ a \emph{metric space}.
\end{definition}

\begin{description}
\item[Axiom 2:] All possible attributes of the entities lie in a collection\footnote{The examples in this article all make use of a finite collection $\col{A}$, but there is no particular reason why this collection should be finite or even countable.} of pseudometric spaces $\col{A}=\{(A_1,d_1),(A_2,d_2),\dotsc\}$.
\end{description}

We lose no generality by requiring each set of attributes to have a pseudometric, because we may choose the \emph{discrete metric} which asserts that all pairs of distinct attributes are distance 1 apart.  

\begin{example}
\label{eg:sar_axiom1}
Throughout this article, we will use the following entities in the search and rescue scenario:
\begin{itemize}
\item \emph{Aircraft last known position}, represented as its longitude $x$, its latitude $y$, and its altitude $z$,
\item \emph{Aircraft last known velocity}, which is represented as a horizontal velocity vector $(v_x,v_y)$ (we will assume the rate of climb or descent is not known),
\item \emph{Precise time of the crash}, represented as the time elapsed since the last known position $t$,
\item \emph{Bearing to RDF sensor 1}, represented as a compass bearing $\theta_1$ from the crash location to the sensor,
\item \emph{Bearing to RDF sensor 2}, represented as a compass bearing $\theta_2$ from the crash location to the sensor, and
\item \emph{The satellite image}, represented by $s$.
\end{itemize}
Therefore, $X=\{x,y,z,v_x,v_y,t,\theta_1,\theta_2,s\}$ is the set of entities.  The attributes are given by the following sets, each of which comes with a metric:
\begin{itemize}
\item \emph{Three dimensional positions}: $\mathbb{R}^3$ specifying longitude, latitude, and altitude in kilometers under the great circle distance metric,
\item \emph{Two dimensional positions}: $\mathbb{R}^2$ specifying longitude and latitude in kilometers under the great circle distance metric,
\item \emph{Two dimensional velocities}: $\mathbb{R}^2$ specifying longitude and latitude velocities in kilometers/second under the Euclidean metric,
\item \emph{Time}: $\mathbb{R}$ in units compatible with the positions and velocities,
\item \emph{Bearings}: $S^1 = \{(u,v) : u^2 + v^2 = 1\} = [0,360)$ points on the unit circle, represented either by their coordinates or as angles in degrees, and
\item \emph{Images}: $C([0,1]\times[0,1],\mathbb{R})$, continuous real-valued functions on the unit square.  (Satellite images are typically lowpass filtered and oversampled, continuous functions are a reasonable model.  For other imaging modalities, piecewise continuity may be a better model.)
\end{itemize}
Some sensors will also return products of these sets so the set of attributes is all such products $\col{A}=\{\mathbb{R}^3,\mathbb{R}^2,\mathbb{R},S^1,C([0,1]\times[0,1],\mathbb{R}), \mathbb{R}^3\times \mathbb{R}^2, \mathbb{R}^3 \times \dotsc\}$.
\end{example}

Thus far, no attributes have yet been ascribed to entities, so there is nothing to distinguish entities from one another.  Yet since different sets of entities are measured by sensors, it makes sense to study these sets.  

\begin{example}
  \label{eg:sar_sets}
For the search and rescue scenario, several sets of entities will be associated to the sensors as indicated in the list below:
\begin{itemize}
\item \emph{Flight plan}: $\{x,y,z\}$,
\item \emph{ATC radar}: $\{x,y,z,v_x,v_y\}$,
\item \emph{RDF sensor 1}: $\{\theta_1,t\}$,
\item \emph{RDF sensor 2}: $\{\theta_2,t\}$, and
\item \emph{Satellite}: $\{\theta_1,\theta_2,s\}$.
\end{itemize}
The reader is cautioned that each of these are \emph{unordered} sets of entities.  Only when attributes are assigned to each sensor by Axiom 4, will ordering among the entities be apparent (see Example \ref{eg:sar_localsections}).

The field office (where the data fusion is performed) has access to all of the entities.  The sets of entities are permitted to overlap, since the same entity may be observed by multiple sensors.  We usually expect that those pairs of sensors that are providing the same entities will provide similar attributes for these entities.  For instance the flight plan and the ATC should provide similar position information unless there has been a deviation from the plan.  Any such deviation is probably a cause for concern and is something that we want to detect!
\end{example}

One way to formalize the kinds of collections of entities is to construct a \emph{topology} on $X$, which specifies which sets of entities have attributes worthy of comparison.  Intuitively a topology expresses that if two entities are unrelated, they can be described in isolation of each other.  If they are closely related, they will be ``hard to disentangle'' and will usually be described together.  The way to formalize this is by enriching a set model of the universe to describe entity relationships (against which attributes of different entities can then be related) into a topological one.  We recall the definition of a topology:

\begin{definition}
A \emph{topology} on a set $X$ is a collection $\col{T}$ of subsets of $X$ satisfying the following properties:
\begin{enumerate}
\item $X \in \col{T}$.  
\item $\emptyset \in \col{T}$.
\item The union of any collection of elements of $\col{T}$ is also in $\col{T}$.
\item The intersection of any finite collection of elements of $\col{T}$ is also in $\col{T}$.
\end{enumerate}
Usually, the elements of $\col{T}$ are called \emph{open} sets, and the pair $(X,\col{T})$ is called a \emph{topological space}.  In the context of sensors, a $U \in \col{T}$ is called a \emph{sensor domain}.
\end{definition}

The first property of a topology states that it is possible to ascribe attributes to all entities (it may be impractical, though).  The second property states that it is possible to not describe any entities.  The third and fourth properties are more subtle.  They state that two entities can be related through their relationships with other entities.  The finiteness constraint in the fourth property avoids examining an infinite regress through increasingly entity-specific relationships.\footnote{In some topologies, the intersection of any collection of elements remains in the topology, but requiring this would destroy our ability to describe certain phenomena.}

\begin{description}
\item[Axiom 3:] The collection of all sets of entities whose attributes are related forms a topology $\col{T}$ on $X$.
\end{description}

We should emphasize that the sets listed in Example \ref{eg:sar_sets} do \emph{not} form a topology.  There are many intersections, for instance $\{t\} = \{\theta_1,t\} \cap \{\theta_2,t\}$, that do not correspond to a set of entities provided by a sensor.  One must construct the topology by specifying the open sets as those sets of entities provided by the sensors, and then extending the collection until it is a topology.

\begin{definition}
  If $X$ is a set and $\col{U}$ is a collection of subsets of $X$, the smallest topology $\col{T}$ which contains $\col{U}$ is given by
\begin{equation*}
    \col{T} = \bigcap_{\col{T}'\text{ is a topology with }\col{U} \subseteq \col{T}'} \col{T}',
  \end{equation*}
and is called \emph{the topology generated by $\col{U}$}.  In this case, we sometimes say that $\col{U}$ is a \emph{subbase} for $\col{T}$.
\end{definition}

\begin{figure}
\begin{center}
\includegraphics[width=3in]{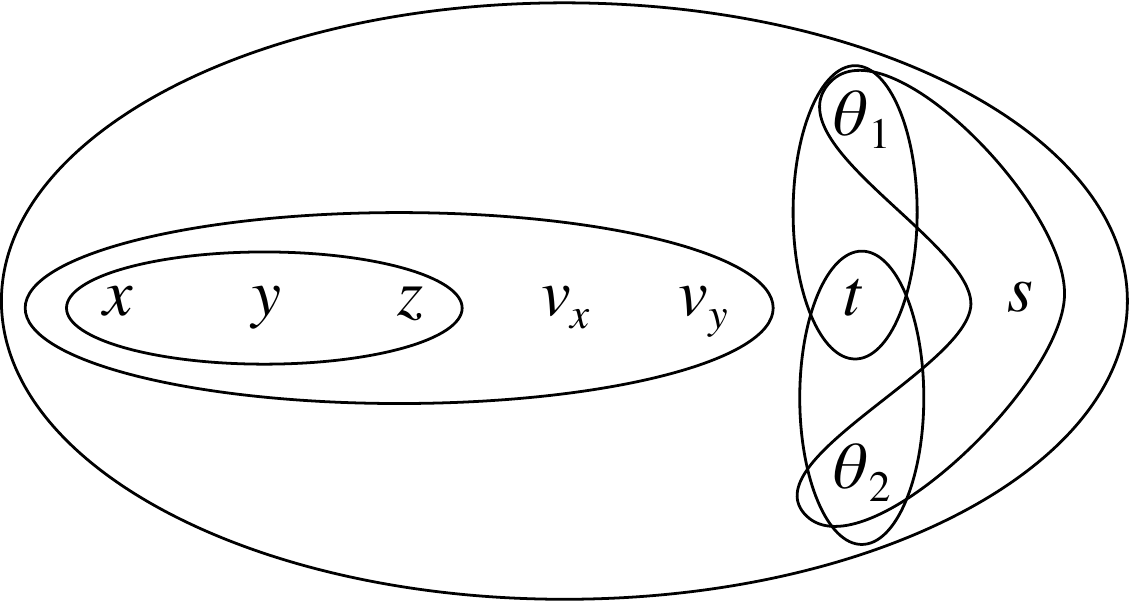}
\caption{Open sets that generate the topology for the search and rescue scenario}
\label{fig:sar_open}
\end{center}
\end{figure}

\begin{example}
  \label{eg:sar_open}
  Figure \ref{fig:sar_open} shows the relationships between the entity sets from Example \ref{eg:sar_sets}.  Using these sets as the subbase for the topology results in many other sets in the topology.  The following open sets are formed as intersections:
  \begin{enumerate}
  \item $\{t\}$,
  \item $\{\theta_1\}$, and
  \item $\{\theta_2\}$.
  \end{enumerate}
  There are many more open sets formed as unions of these sets, for instance
  \begin{enumerate}
  \item $\{\theta_1,\theta_2\}$,
  \item $\{\theta_1,\theta_2,s,t\}$,
  \item $\{x,y,z,\theta_1,\theta_2\}$,
  \item $\{x,y,z,\theta_1,\theta_2,s\}$,
  \item $\{x,y,z,\theta_1,\theta_2,t\}$,
  \item $\{x,y,z,\theta_1,\theta_2,s,t\}$, 
  \end{enumerate}
  among others.  Rather than being dismayed at all the possibilities, observe that many of these open sets represent redundant collections of information about entities.  But which collections should be retained?  For the moment, we will retain all of them.  We will postpone a more complete answer to that question until Section \ref{sec:cech} when we can provide a theoretical guarantee (Theorem \ref{thm:leray}) that information is not lost when only a smaller number of entity sets are used.
\end{example}

Following Example \ref{eg:sar_open}, sensor data will be supplied at some of the open sets in $\col{T}$.  These data are then merged on unions of these open sets whenever the data are consistent along intersections.  The precise relationships among open sets in $\col{T}$ are crucial and generally should have physical meaning.

\begin{remark}
Axiom 3 is a true limitation,\footnote{Instead of a topology, one might consider generalizing further to a Grothendieck \emph{site} \cite{Goldblatt}, but this level of generality seems excessive for practical sensor integration.} which posits that entities are organized into local neighborhoods.  In all situations of which the author is aware, the appropriate topology $\col{T}$ is either the geometric realization of a cell complex or (equivalently) a finite topology built on a preorder as described in \cite{Stong_1966}. The three axioms encompass unlabeled graphs as a special case, for instance.  Finite topologies are particularly compelling from a practical standpoint even though relatively little has been written about them.  The interested reader should consult \cite{May_2003} for a (somewhat dated) overview of the interesting problems.
\end{remark}

\begin{figure}
  \begin{center}
    \includegraphics[width=3in]{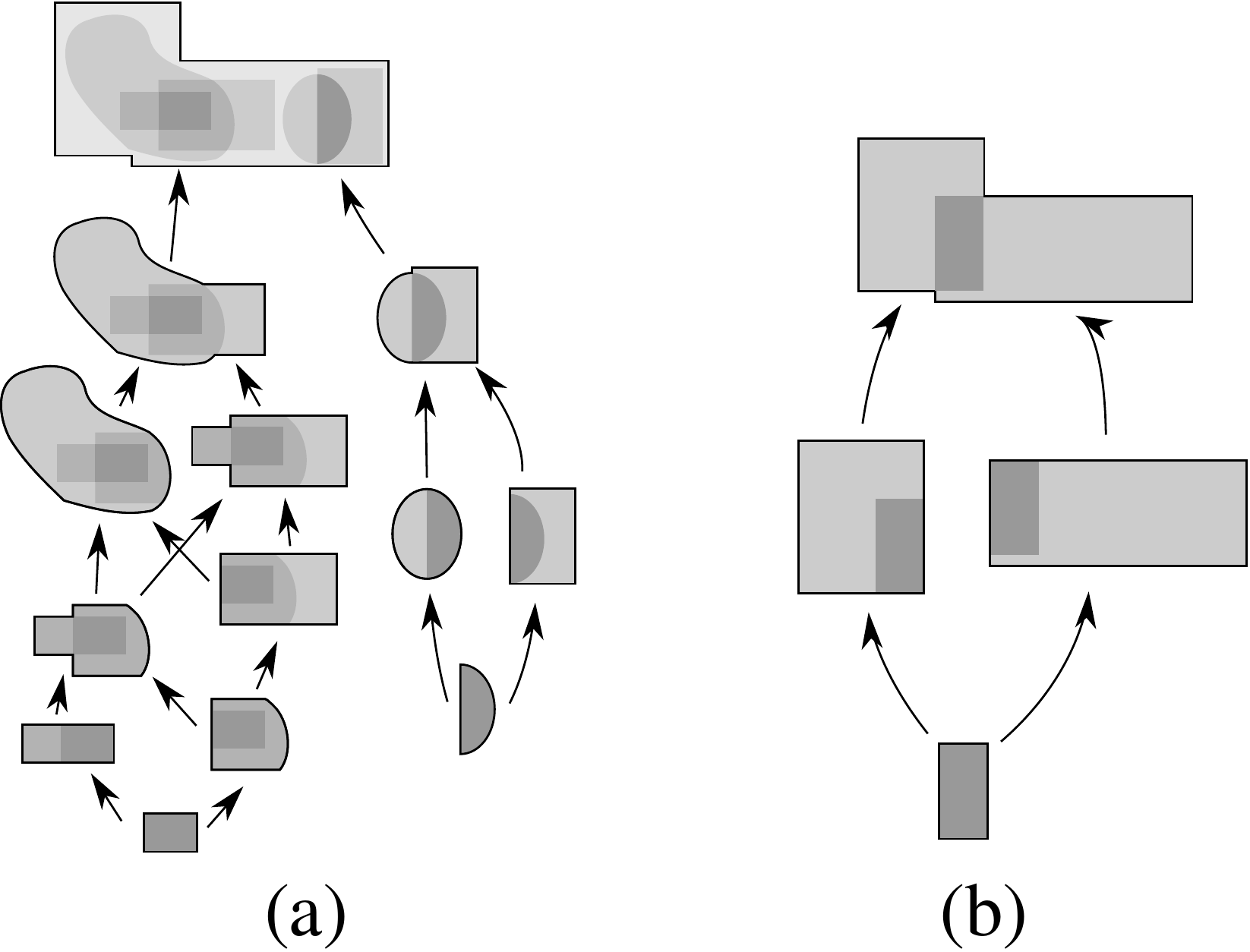}
    \caption{Open sets in two different topologies for image fusion: (a) part of the usual Euclidean topology and (b) a coarser topology generated by two camera fields of view.}
    \label{fig:cam_top}
  \end{center}
\end{figure}

\begin{example}
  \label{eg:cam_top}
In the case of image mosaics, the open sets in $\col{T}$ should have something to do with the regions observed by individual cameras.  Let the set of entities $X$ consist of the locations of points in the scene.  There are two obvious possibilities for a topology:
\begin{enumerate}
\item The topology $\col{T}_1$ of the scene itself (some of which is shown in Figure \ref{fig:cam_top}(a)), generally a manifold or a subspace of Euclidean space based on the pixel values or
\item The topology $\col{T}_2$ which is generated by a much smaller collection of sets $\col{U}$, in which each $U\in\col{U}$ consists of a set of points observed by a single camera (the entirety of this topology is shown in Figure \ref{fig:cam_top}(b)).
\end{enumerate}
Although $\col{T}_1$ is rather natural -- and is what is usually used when forming mosaics -- it has the disadvantage of being quite large.  Figure \ref{fig:cam_top}(a) only shows a small part of one such example!  It also requires fusion to be performed at the pixel level, which is generally inappropriate when there are other heterogeneous sensors that do not return pixel values.  This also can be computationally intense, explains why pixel-level fusion scales badly.  In contrast, $\col{T}_2$ is much smaller (usually finite) and therefore more amenable both to computations and to fusion of heterogeneous data types. 
\end{example}

\begin{description}
\item[Axiom 4:] Each sensor assigns one of the pseudometric spaces of attributes to an open subset of entities in the topological space $(X,\col{T})$.  
\end{description}
\begin{itemize}
\item This assignment is denoted by $\shf{S}: \col{T}\to \col{A}$, so that the attribute pseudometric space assigned to an open set $U\in\col{T}$ is written $\shf{S}(U) \in \col{A}$.
\item The space of attributes $\shf{S}(U)$ is called the space of \emph{observations} over the \emph{sensor domain} $U$.
\item The pseudometric on $\shf{S}(U)$ will be written $d_U$.
\item An element of the Cartesian product $\prod_{U \in \col{T}} \shf{S}(U)$ is called an \emph{assignment\footnote{When all of the sets of observations are vector spaces, \emph{assignments} are usually called \emph{serrations}.} of $\shf{S}$}.
\item We call an element $s \in \shf{S}(X)$ a \emph{global section}.
\end{itemize}

The distinction between a space of attributes and a space of observations is that the latter is directly associated to a particular sensor.  In this article, ``attributes'' may not necessarily be associated to any entity, but ``observations'' are attributes that are assigned to a definite open set of entities.

There are some easy recipes for choosing a space of observations $\shf{S}(U)$ based on the kind of sensor corresponding to the open set $U$:
\begin{itemize}
\item For a camera sensor, the space of observations should be the set of possible images, perhaps denoted as vectors in $\mathbb{R}^{3 m n}$ ($m$ rows, $n$ columns, and $3$ colors for each pixel).  
\item For text documents, the space of observations could be the content of a document as a list of words or a word frequency vector.  
\item If the open set corresponds to the intersection of two sensors of the same type, then usually the observations ought to also be of that type.  For instance, the observations in the overlap between two camera images are smaller images.
\end{itemize}
However, if the open set corresponds to the intersection or union of two different sensors (and therefore not a physical sensor), the set of observations needs to be more carefully chosen and may not correspond to the raw data supplied by any sensor.  This situation is made well-defined by Axioms 5 -- 6.

\begin{example}
  \label{eg:sar_localsections}
  Satisfying Axioms 1-4 in the search and rescue scenario requires pairing the open sets in Example \ref{eg:sar_open} with the attribute sets from Example \ref{eg:sar_axiom1}.  As noted in Example \ref{eg:sar_open}, there are \emph{many} open sets; for brevity's sake, we will not specify them all.  We will denote the assignment of observations $\shf{S}$.  

Axiom 4 merely asks that $\shf{S}$ assign spaces of observations, so for instance the $x$, $y$, and $z$ coordinates could be assigned by $\shf{S}(\{x,y,z\})=\mathbb{R}^3$.  Since the open set $U_1=\{x,y,z\}$ implies no ordering on the entities, it is not yet clear which component of $\mathbb{R}^3$ corresponds to which entity.  Indeed it may be that the correspondence between attributes and entities might be rather complicated.  Given that consideration, those open sets that are associated to sensors according to Example \ref{eg:sar_sets} will be modeled as having spaces of observations as follows:
  \begin{itemize}
  \item \emph{Flight plan}: $\shf{S}(U_1)=\shf{S}(\{x,y,z\})=\mathbb{R}^3$ the last known position according to the flight plan.
  \item \emph{ATC}: $\shf{S}(U_2)=\shf{S}(\{x,y,z,v_x,v_y\})=\mathbb{R}^3\times\mathbb{R}^2$ the last known position and velocity reported by the ATC.
  \item \emph{RDF sensor 1}: $\shf{S}(U_3)=\shf{S}(\{\theta_1,t\})=S^1 \times \mathbb{R}$ bearing and time of beacon's signal as reported by amateur operator 1.
  \item \emph{RDF sensor 2}: $\shf{S}(U_4)=\shf{S}(\{\theta_2,t\})=S^1 \times \mathbb{R}$ bearing and time of beacon's signal as reported by amateur operator 2.
  \item \emph{Satellite}: $\shf{S}(U_5)=\shf{S}(\{s,\theta_1,\theta_2\})=\mathbb{R}^2$ object detection from satellite image.  We could have equally well chosen $\shf{S}(U_5)$ to be the space of raw gray scale images $C([0,1]\times[0,1],\mathbb{R})$ or $\mathbb{R}^{3mn}$, but following standard practice in wide-area search, we assume that the image has already been processed into a list of candidate object detections,\footnote{If an object detection is consistent with a possible crash site, an analyst would be instructed to examine the unprocessed image.} each of which consists of a latitude and longitude only.
  \item \emph{Field office}: $\shf{S}(X)=\shf{S}(\{x, y, z, t, v_x, v_y, \theta_1, \theta_2, s\}) = \mathbb{R}^3\times\mathbb{R}^2\times\mathbb{R}$ last known position, last known velocity, time of beacon, and satellite image.  We note that because the satellite image has been processed into object detections, the unprocessed image is not represented in the attribute space.
  \end{itemize}
  For those open sets associated to intersections, the spaces of observations are easy to specify by reading off the entities in the open sets:
  \begin{enumerate}
  \item $\shf{S}(\{t\}) = \mathbb{R}$, the time of the crash,
  \item $\shf{S}(\{\theta_1\}) = S^1$, the bearing of the beacon to amateur operator 1, and
  \item $\shf{S}(\{\theta_2\}) = S^1$, the bearing of the beacon to amateur operator 2.
  \end{enumerate}
  But what about sets like $\{\theta_1,\theta_2\}$? There are two obvious possibilities: $S^1 \times S^1$ (two bearings) versus $\mathbb{R}^2$ (the intersection of the bearing sight lines).  Axiom 4 is mute about which is preferable, but it asserts that we must choose \emph{some} set.  Each open set formed by unions of the subbase is a place where such a decision must occur.  Since Axioms 5 and 6 completely constrain these spaces of observations, we will delay their specification until after these Axioms are specified in Example \ref{eg:sar_full_def}.
\end{example}

\begin{example}
  \label{eg:camera_radar}
On an open set $U\cap V$ corresponding to the intersection between a camera image $U$ and a radar image $V$, each observation in $\shf{S}(U\cap V)$ could be a list of target locations rather than the data from either sensor.  If there are $n$ targets with known locations in $\mathbb{R}^3$, then clearly this space is parameterized by $\mathbb{R}^{3n}$.  If the number of targets is unknown, but is known to be less than $n$, then the space of observations would be given by
\begin{equation*}
  \{\perp\}\sqcup \mathbb{R}^3 \sqcup \mathbb{R}^6 \sqcup \dotsb \mathbb{R}^{3n}
\end{equation*}
where $\sqcup$ represents disjoint union and $\perp$ represents the state of having no targets in the scene, and hence represents a null observation. 
\end{example}

This example indicates that moving from the observations over one open set to a smaller open set ought to come from a particular transformation that depends on the sensors involved.  Put another way, removing entities of a sensor domain from consideration should impact the set of observations in a systematic way.

\begin{description}
\item[Axiom 5:] It is possible to transform sets of observations to new sets of observations by reducing the size of sensor domains.
\end{description}
\begin{itemize}
\item Whenever a pair of sensors assign attributes to open sets of entities $U \subseteq V \subseteq X$ in the topology $\col{T}$, there is a (fixed) continuous function\footnote{Notationally, $\shf{S}(U)$ is a pseudometric space, but $\shf{S}(U\subseteq V)$ is a function between pseudometric spaces.  $\shf{S}$ itself refers to \emph{both} as a single mathematical object.}
  \begin{equation*}
    \shf{S}(U\subseteq V):\shf{S}(V)\to \shf{S}(U)
  \end{equation*}
  from the pseudometric space of observations over the larger sensor domain to the pseudometric space of observations over the smaller one.
\item If $U \subseteq V \subseteq W$ and $U \subseteq Y \subseteq W$, then
  \begin{equation*}
    \shf{S}(U \subseteq V)\circ\shf{S}(V \subseteq W) = \shf{S}(U \subseteq Y)\circ\shf{S}(Y \subseteq W),
  \end{equation*}
  which means that we may refer to $\shf{S}(U \subseteq W)$ without ambiguity.
\item $\shf{S}(U \subseteq U)$ is the identity function.
\item The continuous function $\shf{S}(U \subseteq V)$ is called a \emph{restriction}, because it restricts observations on an open set $V$ to a subset $U$.
\item Any $\shf{S}$ that satisfies Axioms 1--5 is called a \emph{presheaf of pseudometric spaces}.  Analogously, if $d_U$ is a metric for all $U \in \col{T}$, we say $\shf{S}$ is a \emph{presheaf of metric spaces}. 
\end{itemize}

The topology $\col{T}$ imposes a lower bound on how small sensor domains can be, and therefore how finely restrictions are able to discriminate between individual entities.  Generally speaking, one wants the topology to be as small (coarse) as practical, so that fewer restrictions need to be defined (and later checked) during the process of fusing data.

\begin{figure}
\begin{center}
\includegraphics[width=5in]{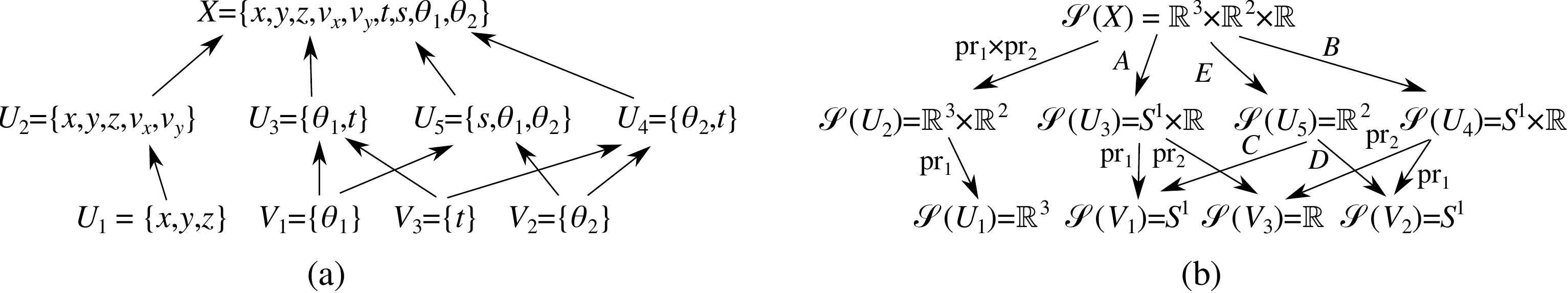} 
\caption{Some of restrictions for the search and rescue scenario; to reduce clutter, unions between open sets are not shown.  (a) The open sets and their intersections, where the arrows indicate subset relations. (b) The spaces of sections, with restrictions shown as arrows.  The restrictions $A$, $B$, $C$, $D$, and $E$ are defined in Example \ref{eg:sar_rest}. }
\label{fig:sar_rest}
\end{center}
\end{figure}

\begin{example}
  \label{eg:sar_rest}
  Continuing with the construction of $\shf{S}$ from Example \ref{eg:sar_localsections} for the search and rescue scenario, restrictions can be constructed as shown in Figure \ref{fig:sar_rest}, where $\id$ represents an identity function and $\pr_k$ represents a projection onto the $k$-th factor of a product.  Except for the restrictions $A$, $B$, $C$, $D$, and $E$, the restrictions merely extract the spaces of observations for various entities that are retained when changing open sets.  For instance, when relating the entities as viewed by the ATC to the flight plan, one merely leaves out the velocity measurements.

  The restrictions $A$, $B$, $C$, $D$, and $E$ need to perform some geometric computations to translate between compass bearings and locations.  They are defined\footnote{We are using $\tan^{-1}$ for brevity -- really something like the C language {\tt atan2} function would be preferable to obtain angles anywhere on the circle.} as follows:
  \begin{eqnarray*}
    A(x,y,z,v_x,v_y,t) &=& \left(\tan^{-1} \frac{x+v_xt -r_{1x}}{y+v_yt -r_{1y}},t\right),\\
    B(x,y,z,v_x,v_y,t) &=& \left(\tan^{-1} \frac{x+v_xt -r_{2x}}{y+v_yt - r_{2y}},t\right),\\
    C(s_x,s_y) &=& \tan^{-1} \frac{s_x - r_{1x}}{s_y - r_{1y}},\\
    D(s_x,s_y) &=& \tan^{-1} \frac{s_x - r_{2x}}{s_y - r_{2y}}, \text{ and} \\
    E(x,y,z,v_x,v_y,t) &=& (x+v_x t, y+v_y t)
  \end{eqnarray*}
  where $s_x,s_y$ are coordinates of an object detected in the satellite image, $r_{1x},r_{1y}$ are the coordinates of the first RDF sensor and $r_{2x},r_{2y}$ are the coordinates of the second RDF sensor.
  Note that restrictions involving unions of open sets are not shown, since we are still waiting to axiomatically constrain them using Axiom 6 in Example \ref{eg:sar_full_def}.
 \end{example}

\begin{example}
  Axiom 5 has quite a clear interpretation for collections of cameras -- the restrictions perform cropping of the domains.  The topology $\col{T}$ controls how much cropping is permitted.  Figure \ref{fig:crop} gives two examples, in which the topology on the left is smaller than the one on the right.  Indeed, if we consider the topology $\col{T}_1$ from Example \ref{eg:cam_top}, then a huge number of crops must be defined, including many that are unnecessary to form a mosaic of the images.  However, the topology $\col{T}_2$ from that example requires the modeler to define only those crops necessary to assemble a mosaic from the images.
\end{example}

\begin{figure}
\begin{center}
\includegraphics[width=4in]{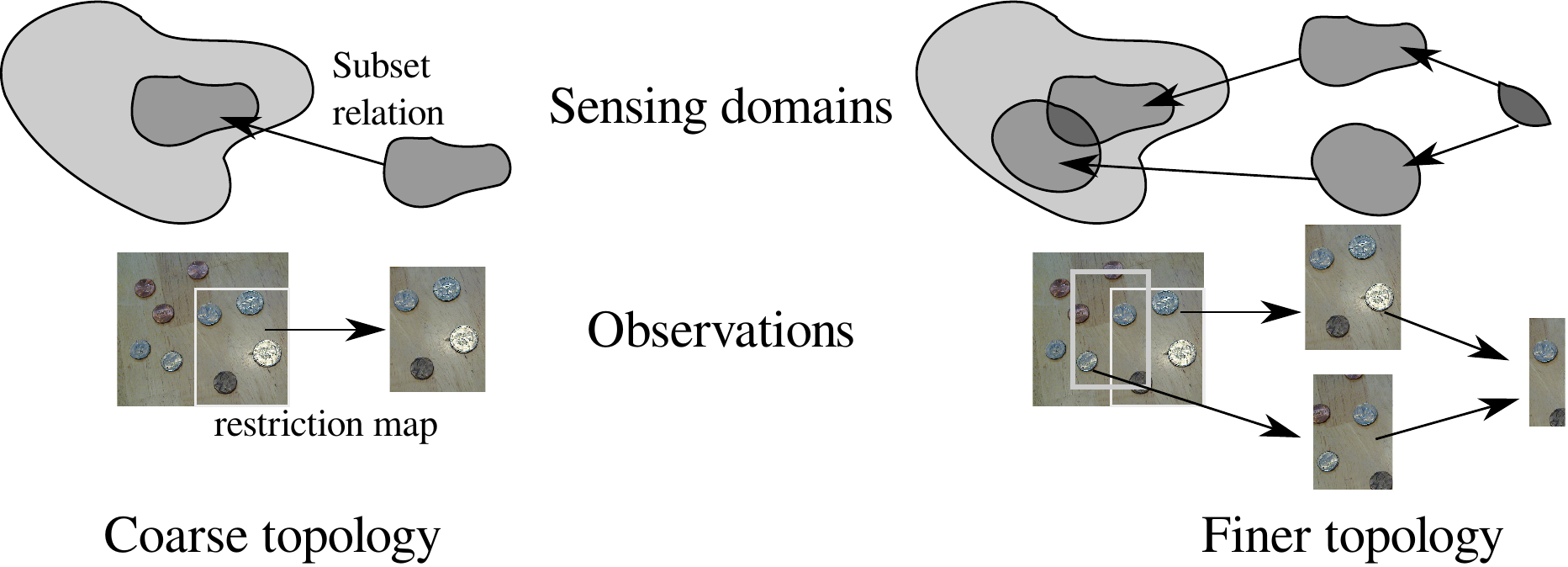}
\caption{Cropping an image corresponds to reducing the size of the sensor domain.  The topology determines what crops can be performed as restriction maps: a coarse topology (left) permits fewer crops than a finer one (right).}
\label{fig:crop}
\end{center}
\end{figure}
 
Without constraints on the nature of the restriction functions, it is in general impossible to combine observations from overlapping sensor domains because we lack a concrete definition for the sets of observations over unions.  Axiom 6 supplies a necessary and sufficient constraint, by requiring there to be an assignment of observations over a union of sensor domains whenever there are observations that agree on intersections of sensor domains.  Axiom 6 asserts that this ``globalized'' observation (a \emph{section} of $\shf{S}$) is unique -- a statement of the underlying determinism of the representation.

\begin{description}
\item[Axiom 6:] Observations from overlapping sensor domains that are equal upon restricting to the intersection of the sensor domains uniquely determine an observation from the union of the sensor domains.
\end{description}
\begin{itemize}
\item Suppose that $\{U_\alpha\}_{\alpha\in I}$ is a collection of sensor domains and there are corresponding observations $a_\alpha \in \shf{S}(U_\alpha)$ indexed by $\alpha \in I$ taken from each sensor domain.  If the following equation\footnote{This equation is called \emph{conjunctivity} in the sheaf literature.} is satisfied
\begin{equation*}
\left(\shf{S}(U_\alpha\cap U_\beta \subseteq U_\alpha)\right)(a_\alpha) = \left(\shf{S}(U_\alpha\cap U_\beta \subseteq U_\beta)\right)(a_\beta),
\end{equation*}
for all $\alpha$ and $\beta$ in $I$, then there is an $a \in \shf{S}(\cup_{\gamma \in I} U_\gamma)$ that restricts to each $a_\alpha$ via 
\begin{equation*}
a_\alpha = \left(\shf{S}(U_\alpha \subseteq \cup U_\gamma)\right)(a).
\end{equation*}
\item Suppose that $U$ is a sensor domain and that $a,b \in \shf{S}(U)$ are assignments of observations to that sensor domain.  If these two observations are equal on all sensor domains $V$ in a collection $\{V_\alpha\}$ for $\alpha \in I$ of sensor domains with $\bigcup_{\alpha \in I} V_\alpha = U$, 
\begin{equation*}
\shf{S}(V \subseteq U)(a) = \shf{S}(V \subseteq U)(b),
\end{equation*}
then we can conclude\footnote{A presheaf satisfying this equation is called a \emph{monopresheaf}.} that $a=b$.
\item When $\shf{S}$ satisfies Axioms 1--6, we call $\shf{S}$ a \emph{sheaf of pseudometric spaces}.
\end{itemize}

In the case of two sensor domains $U_1, U_2,$ Axiom 6 can be visualized as a pair of \emph{commutative diagrams}
\begin{equation*}
  \xymatrix{
    & U_1 \cup U_2 & & & \shf{S}(U_1 \cup U_2) \ar[dl]_{\shf{S}(U_1 \subseteq U_1 \cup U_2)}\ar[dr]^{\shf{S}(U_2 \subseteq U_1 \cup U_2)} & \\
    U_1 \ar[ur]^{U_1 \subseteq U_1\cup U_2} & & U_2 \ar[ul]_{U_2 \subseteq U_1\cup U_2} & \shf{S}(U_1) \ar[dr]_{\shf{S}(U_1\cap U_2 \subseteq U_1)} & & \shf{S}(U_2) \ar[dl]^{\shf{S}(U_1\cap U_2 \subseteq U_2)} & \\
    & U_1 \cap U_2 \ar[ur]_{U_1 \cap U_2 \subseteq U_2}\ar[ul]^{U_1\cap U_2 \subseteq U_1} & & & \shf{S}(U_1 \cap U_2) & \\
    }
\end{equation*}
in which the arrows on the left represent subset relations, while the arrows on the right represent restriction maps.  The Axiom asserts that pairs of elements on the middle row that restrict to the same element on the bottom row are themselves restrictions of an element in the top row.  Therefore Axiom 6 can always be satisfied if $\shf{S}(U_1 \cup U_2)$ has not been constrained by the sensor models.  One merely needs to define
\begin{equation}
\label{eq:pullback} 
\shf{S}(U_1 \cup U_2) = \{(x,y) \in \shf{S}(U_1) \times \shf{S}(U_2) : \left(\shf{S}(U_1 \cap U_2 \subseteq U_1)\right)(x) =  \left(\shf{S}(U_1 \cap U_2 \subseteq U_2)\right)(y)\}
\end{equation}
and let the restriction maps project out of the product in the diagram above.  

\begin{remark}
Axiom 6 implies that the sets of observations cannot get larger as the open sets get smaller.  For instance, the diagram 
\begin{equation*}
\xymatrix{
&\shf{S}(U_1\cup U_2)=\{a\}\ar[dl]_{f}\ar[dr]_{\id}&\\
\shf{S}(U_1)=\{a,c\}\ar[dr]_g&&\shf{S}(U_2)=\ar[dl]_{\id}\{a\}\\
&\shf{S}(U_1 \cap U_2)=\{a\}&
}
\end{equation*}
in which $f(a)=a$, $g(a)=g(c)=a$, and $\id$ is the identity function violates Axiom 6.  Specifically, Axiom 6 would imply that given that $c \in \shf{S}(U_1)$ and $a\in\shf{S}(U_2)$ both map to $a\in \shf{S}(U_1 \cap U_2)$, there should be an element of $\shf{S}(U_1 \cup U_2)$ mapping to $c \in \shf{S}(U_1)$ even though this simply does not occur.
\end{remark}

\begin{remark}
  If the observations consist of probability distributions, it is usually best to think of ``equality'' in Axiom 6 as being ``equality almost surely'' throughout the network of sensors, rather than pointwise equality.
\end{remark}

\begin{example}
  \label{eg:sar_full_def}
  It is now possible to complete the definition of the sheaf model for the search and rescue scenario, picking up where we left off in Example \ref{eg:sar_rest}.  While enumerating all of the spaces of observations over possible unions in the topology would be tedious, a few are enlightening.

\begin{itemize}
\item Consider first the case of the union of $U_1$ (flight plan) and $U_2$ (ATC radar), for which we have already defined
\begin{eqnarray*}
\shf{S}(U_1) &=& \mathbb{R}^3,\\
\shf{S}(U_2) &=& \mathbb{R}^3\times\mathbb{R}^2, \text{ and }\\
U_1\cap U_2 &=& U_1.
\end{eqnarray*}
Therefore, Equation \eqref{eq:pullback} defines 
\begin{equation*}
\shf{S}(U_1 \cup U_2) = \shf{S}(U_2) = \mathbb{R}^3 \times \mathbb{R}^2.
\end{equation*}

\item For the union of $U_1$ (flight plan) and $U_3$ (RDF sensor 1), we have that $U_1 \cap U_3 = \emptyset$, so Equation \eqref{eq:pullback} defines
\begin{equation*}
\shf{S}(U_1 \cup U_3) = \shf{S}(U_1) \times \shf{S}(U_3) = \mathbb{R}^3 \times S^1 \times \mathbb{R}.
\end{equation*}

\item Finally, the open sets $U_3$, $U_4$ corresponding to the two RDF sensors have $U_3 \cap U_4 = \{t\}$.  In Example \ref{eg:sar_localsections}, we defined $\shf{S}(\{t\}) = \mathbb{R}$ to represent the time of the crash measured by both beacons.  Thus Equation \eqref{eq:pullback} constructs
\begin{equation*}
\shf{S}(U_3 \cup U_4) = S^1 \times \mathbb{R} \times S^1,
\end{equation*}
which represents both bearings and the common time from the two RDF sensors.  Global sections over $U_3 \cup U_4$ explicitly assert that the two sensors are in agreement about the time the beacon was sounded.  If the two sensors do not agree, this cannot correspond to a global section!
\end{itemize}
\end{example}

\begin{figure}
\begin{center}
\includegraphics[width=5in]{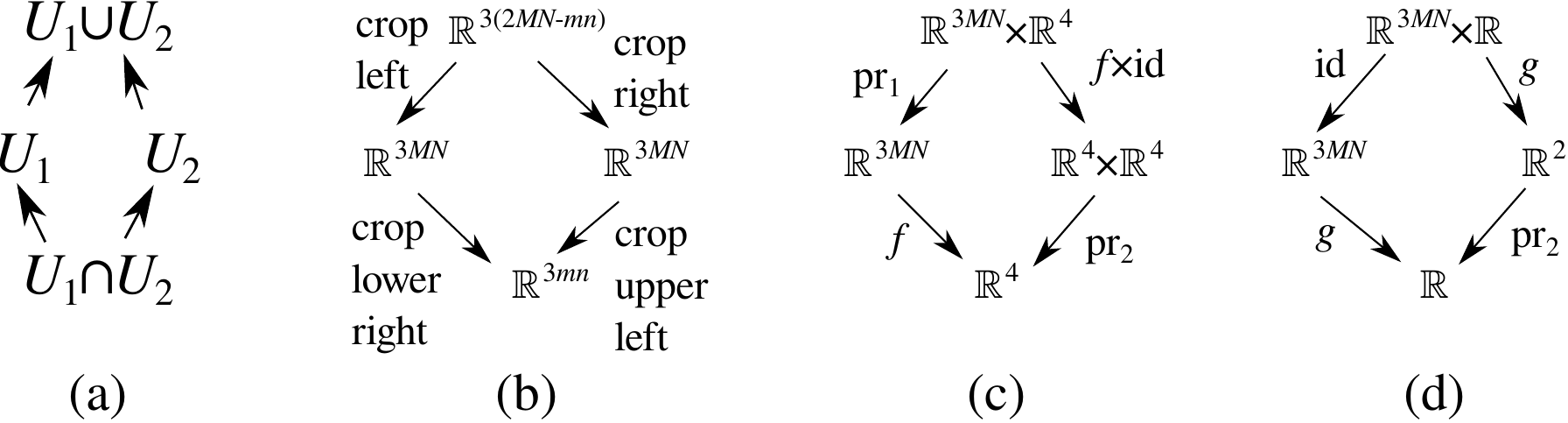}
\caption{Sheaf models for three ways to combine information about coins on a table.  (a) The topology for two overlapping camera fields of view.  (b) Mosaicing the images directly. (c) processing the right image with object-level classifications.  (d) processing the right image into the total value of the coins.}
\label{fig:coins_presheaf}
\end{center}
\end{figure}

\begin{figure}
\begin{center}
\includegraphics[width=5in]{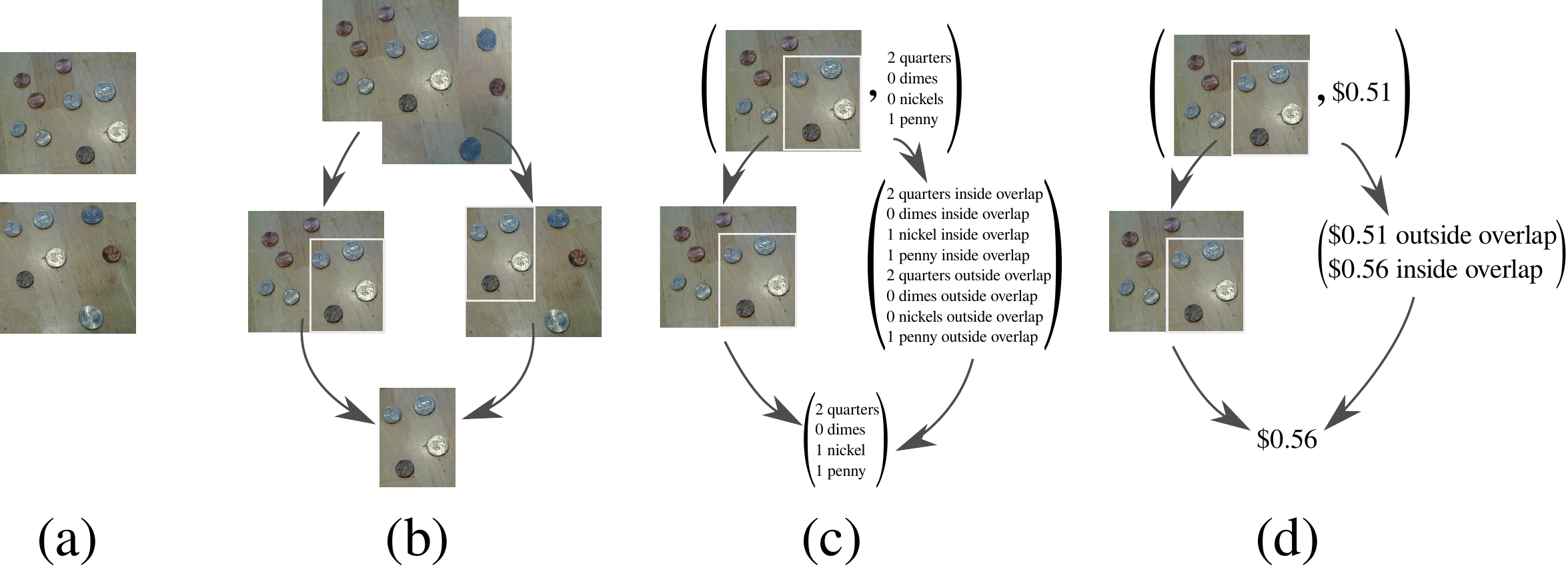}
\caption{Three possible ways to combine information about coins on a table.  (a) The original images. (b) forming the mosaic of two images, with no further interpretation.  (c) fusing an image with object-level classifications, but no interpretation.  (d) combining an image with the total value of the coins, which requires interpretation of objects in the scene.}
\label{fig:coins}
\end{center}
\end{figure}

With Axiom 6, it becomes possible to draw global inferences from data specified locally.
\begin{example}
  \label{eg:coins}
  Referring to the topology $\col{T}_2$ shown Figure \ref{fig:coins_presheaf}(a) (also in Figure \ref{fig:cam_top}(b)), we again consider the problem of counting coins on the table.  Assume that there are three different types of coins: pennies, nickels, dimes, and quarters.  Three different possibilities for processing chains lead to three distinct sheaves, shown in Figure \ref{fig:coins_presheaf}:
  \begin{enumerate}
  \item No processing aside from image registration (Figure \ref{fig:coins_presheaf}(b)),
  \item Leaving one image (on the left) alone while running an object classifier on the other (Figure \ref{fig:coins_presheaf}(c)) in which the restriction $f$ computes the number of coins of each type present in the image, or
  \item Leaving one image (again on the left) alone, while running a different process $g$ that computes the total monetary value within an image (Figure \ref{fig:coins_presheaf}(d)).
  \end{enumerate}
  Notice that in the latter two cases, the observation in the intersection is \emph{not} an image, but something rather more processed.  Given that $f$ and $g$ actually perform the processing required of them, it is easy to verify that Axiom 6 will hold for each sheaf merely by verifying that Equation \ref{eq:pullback} is satisfied over the unions.  

Starting with the original pair of images in Figure \ref{fig:coins}(a), which constitute observations over $U_1$ and $U_2$ in the sheaf in Figure \ref{fig:coins_presheaf}(b), Figure \ref{fig:coins}(b) shows the corresponding global section.  In the case of the sheaves shown in Figures \ref{fig:coins_presheaf}(c) and \ref{fig:coins_presheaf}(d) we can still use the image as an observation over $U_1$.  However, the image that was an observation over $U_2$ must be processed into object detections or a total dollar amount before it can constitute an observation for the sheaves in Figures \ref{fig:coins_presheaf}(c) and \ref{fig:coins_presheaf}(d).  Once that is done, then Figures \ref{fig:coins}(c) and \ref{fig:coins}(d) constitute the resulting global sections that are guaranteed to exist by Axiom 6.
\end{example}

The previous examples show that observations over some of the sensor domains can be used to infer observations over all other sensor domains when there is self consistency among the observations.  This extrapolation of observations into other sensor domains is the hallmark of a data fusion process, but depends on restrictions of observations being exactly equal on intersections of sensor domains.  Of course, this is quite unrealistic!  In Section \ref{sec:fusion} we argue that \emph{data fusion} consists of the process of identifying the global section that is the ``best match'' to the available data.

Sometimes, as the next example shows, problems arise when the integrated sensor system does not satisfy Axiom 6.

\begin{figure}
\begin{center}
\includegraphics[width=4in]{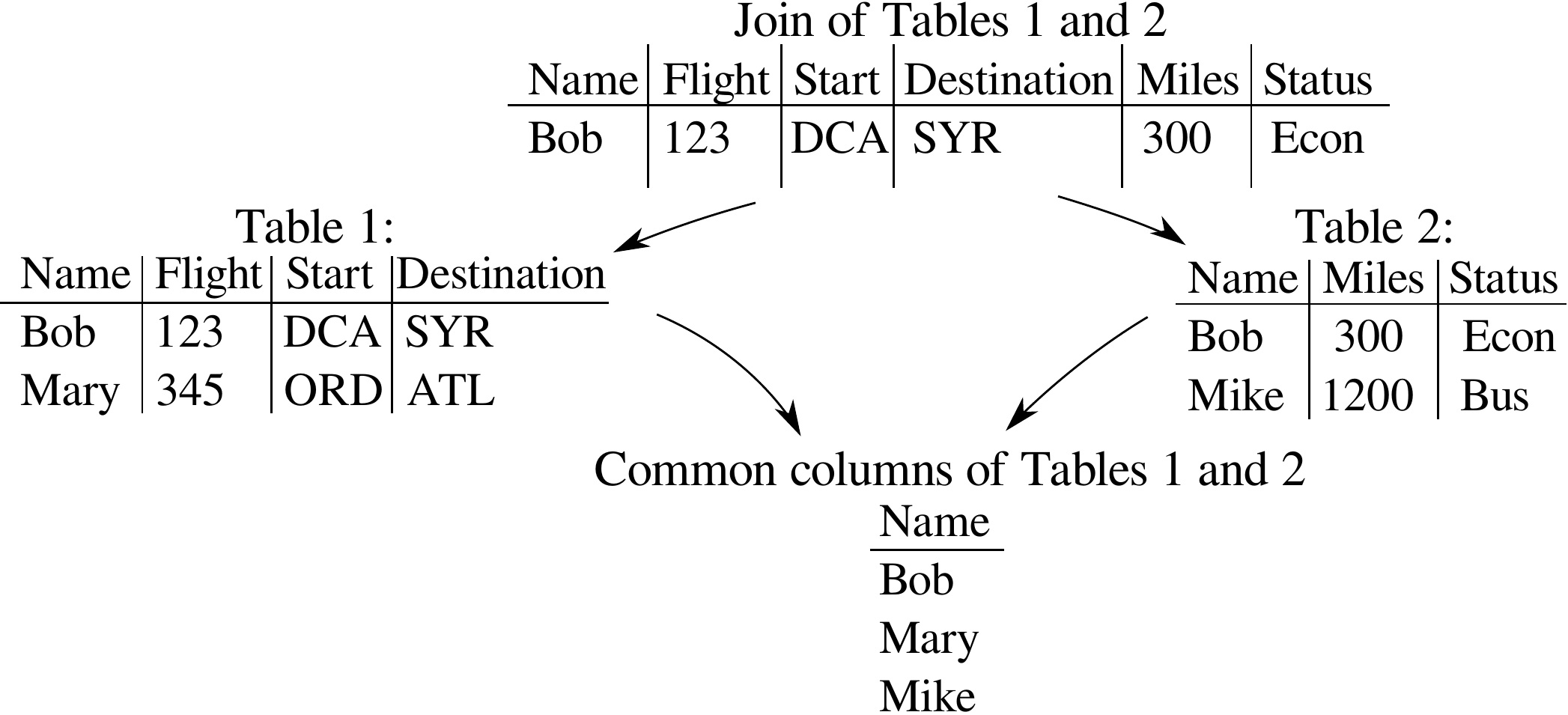}
\caption{The diagram of the sheaf modeling two tables in a passenger database owned by an airline.}
\label{fig:airline}
\end{center}
\end{figure}

\begin{example}
\label{eg:database}
  Real world examples of relational databases often do not satisfy Axiom 6, which is sometimes a source of problems.  Consider a database with two tables like so:
  \begin{enumerate}
  \item a table whose columns are passenger names, flight numbers, a starting airport, and a destination airport, and
  \item a table whose columns are passenger names, frequent flier miles, and flier upgrade status.
  \end{enumerate}
  The two tables are shown at left and right middle in Figure \ref{fig:airline}.  The restrictions (shown by arrows in the Figure) correspond to projecting out the columns to form subtables.  Axiom 6 ensures that if the two tables each have a row with the same passenger's name, then there really is at least one passenger with all of the associated attributes (start, destination, miles, and status).  This is a table join operation: rows from the two tables that match correspond to rows in the joined table.  Axiom 6 also ensures that there is \emph{exactly one} such passenger, so in Figure \ref{fig:airline} this means that ``Bob'' in Table 1 really is the same as ``Bob'' in Table 2.  From personal experience -- the author has a relatively common name -- Axiom 6 does not hold for some airlines. 
\end{example}

\section{Data fusion and quality assessment}
\label{sec:fusion}

The previous section discussed how the relationships between sensors and their possible spaces of observations fit together into an integrated sensor system.  Using the formal model of a sensor system as a presheaf $\shf{S}$ (satisfying Axioms 1--5), it becomes possible to talk about combining the data from sensors into a more unified perspective.  The idea is that if two sensors describe overlapping sets of entities, their observations can be combined when they ``agree'' on the common entities.  Since realistic data contains errors and uncertainties, it is rarely the case that ``agreement'' will be exact.  This suggests formalizing the distance between assignments, which has the structure of a pseudometric space.

\begin{definition}
  \label{df:assignment_pmetric}
  The space of assignments $\prod_{U \in \col{T}} \shf{S}(U)$ has a pseudometric, which is given by
  \begin{equation*}
    D(a,b) = \sup_{U \in \col{T}} d_U(a(U),b(U))
  \end{equation*}
  for two assignments $a,b$ of $\shf{S}$.
\end{definition}

Two assignments are close to one another whenever they consist of observations that are close in all sensor domains.  In Example \ref{eg:sar_localsections}, assignments consist of one report from each sensor, each intersection, and from the field office.  We should expect that the reports ought to be related -- especially between a pair of sensors and their \emph{joint} report on the same entities -- so if one such report is awry, we should suspect that something may be amiss.  Therefore, the central problem we must address is an optimization.
\begin{problem} (Data Fusion)
  \label{prob:fusion}
  Given an assignment $a \in \prod_{U \in \col{T}} \shf{S}(U)$, find the nearest global section $s$ of $\shf{S}$, namely construct
  \begin{eqnarray*}
    \text{argmin}_{s \in \shf{S}(X)} D(s,a)&=&\text{argmin}_{s \in \shf{S}(X)} \sup_{U \in \col{T}} d_U(a(U),s(U))\\
 &=& \text{argmin}_{s \in \shf{S}(X)} \sup_{U \in \col{T}} d_U(a(U),\left(\shf{S}(U \subseteq X)\right)(s)).
  \end{eqnarray*}
\end{problem}

The constraint $s\in \shf{S}(X)$ on the argmin means that the optimization must proceed over all global sections of the presheaf $\shf{S}$.  Thus, the presheaf structure will usually play a substantial role in constraining the solutions of Problem \ref{prob:fusion} through its space of global sections.  As a practical matter, this means that the space of global sections must be determined \emph{before} one can perform the optimization.  The global sections are easily found by inspection in most of the examples in this article, as demonstrated in Examples \ref{eg:sar_full_def} -- \ref{eg:database}.  For more complicated situations, provided one has a sheaf of vector spaces (and not just a presheaf), the space of global sections can be computed via linear algebra (see Proposition \ref{prop:cohomology_global} in Section \ref{sec:cech}).  Without the presheaf structure, it is impossible to even pose Problem \ref{prob:fusion}!

If each set of observations lies in a vector space, then Data Fusion is an optimization over vector spaces, which a well-studied and well-understood setting.  Our contribution is an appropriate pseudometric, which will depend intimately on the presheaf $\shf{S}$.  If any set of observations is not a vector space, then Data Fusion will need to be solved by some other optimization method.  Regardless, solving Data Fusion on a coarser topology is easier than on a finer one, since coarser topologies have fewer open sets.

In order to understand conditions under which this Problem may be solved, we make the following definition to quantify the self-consistency of an assignment.

\begin{definition}
  \label{def:approx_section_topo}
An \emph{$\epsilon$-approximate section} for a presheaf of pseudometric spaces $\shf{S}$ is an assignment $s\in\prod_{U \in \col{T}} \shf{S}(U)$ for which 
\begin{equation*}
d_V(s(V),\shf{S}(V \subseteq U)s(U)) \le \epsilon
\end{equation*}
for all $V \subseteq U$.  The minimum value of $\epsilon$ for which an assignment $s\in\prod_{U \in \col{T}} \shf{S}(U)$ is an $\epsilon$-approximate section is called the \emph{consistency radius} of $s$.
\end{definition}

Smaller consistency radii imply better agreement amongst sensors.  Theoretically, the best one can do is exact equality, in which approximate sections closely correspond to global sections.

\begin{proposition}
\label{prop:global_to_approx}
If $\shf{S}$ is a presheaf of pseudometric spaces, every global section $s\in \shf{S}(X)$ induces a 0-approximate section of $\shf{S}$.  Furthermore, if $\shf{S}$ is a presheaf of metric spaces then this correspondence is a homeomorphism.
\end{proposition}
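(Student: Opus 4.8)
The plan is to realize the correspondence as the \emph{total restriction map} $\Phi\colon \shf{S}(X)\to \prod_{U\in\col{T}}\shf{S}(U)$ sending a global section $s$ to the assignment $\hat{s}$ defined by $\hat{s}(U)=\left(\shf{S}(U\subseteq X)\right)(s)$. To prove the first claim I would check directly that $\hat{s}$ has consistency radius $0$ in the sense of Definition \ref{def:approx_section_topo}. For any $V\subseteq U$, the composition clause of Axiom 5 gives $\left(\shf{S}(V\subseteq U)\right)(\hat{s}(U))=\left(\shf{S}(V\subseteq U)\right)\left(\shf{S}(U\subseteq X)\right)(s)=\left(\shf{S}(V\subseteq X)\right)(s)=\hat{s}(V)$, so that $d_V(\hat{s}(V),\shf{S}(V\subseteq U)\hat{s}(U))=0$. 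This half uses only the presheaf (pseudometric) structure, so it establishes the first sentence of the Proposition.

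For the metric case I would show $\Phi$ is a bijection onto the set of $0$-approximate sections. Injectivity needs no metric hypothesis: since $\shf{S}(X\subseteq X)=\id$, the $X$-component of $\hat{s}$ is $s$ itself, so $\hat{s}=\hat{t}$ forces $s=t$. The metric hypothesis is what makes $\Phi$ \emph{onto} the $0$-approximate sections. Given any $0$-approximate section $a$, specializing its defining inequality to $U=X$ yields $d_V(a(V),\shf{S}(V\subseteq X)a(X))=0$ for every $V$; because each $d_V$ is now a genuine metric, distance $0$ forces $a(V)=\left(\shf{S}(V\subseteq X)\right)(a(X))$, i.e. $a=\Phi(a(X))$. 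In the merely pseudometric setting this step fails, since one can have $0$-approximate sections differing from a restricted global section by distance-$0$ perturbations, which is exactly why a bijection (and hence a homeomorphism) is claimed only for metrics.

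It remains to check that $\Phi$ and $\Phi^{-1}$ are continuous for $d_X$ on $\shf{S}(X)$ and the assignment pseudometric $D$ of Definition \ref{df:assignment_pmetric} (which restricts to a genuine metric on the $0$-approximate sections, since $D(\hat{s},\hat{t})\ge d_X(s,t)$). The inverse $\Phi^{-1}(a)=a(X)$ is just projection onto the $X$-component, and as $X\in\col{T}$ we get $d_X(a(X),b(X))\le \sup_{U} d_U(a(U),b(U))=D(a,b)$, so $\Phi^{-1}$ is $1$-Lipschitz. I expect the one genuinely delicate point to be continuity of $\Phi$ itself, which amounts to controlling $D(\hat{s},\hat{t})=\sup_{U\in\col{T}} d_U(\shf{S}(U\subseteq X)s,\shf{S}(U\subseteq X)t)$ by $d_X(s,t)$. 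Each restriction $\shf{S}(U\subseteq X)$ is continuous by Axiom 5, but a supremum of maps each tending to $0$ need not tend to $0$ without (uniform) equicontinuity of the family $\{\shf{S}(U\subseteq X)\}_{U\in\col{T}}$. The clean resolution, and the one matching the standing assumptions of the paper, is that $\col{T}$ is finite: then the supremum is a maximum of finitely many continuous functions each vanishing as $t\to s$, so $\Phi$ is continuous and the homeomorphism follows. For an infinite topology I would flag that an equicontinuity hypothesis is required rather than assert the statement unconditionally.
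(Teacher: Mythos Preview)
Your argument follows essentially the same route as the paper: define the map $\Phi$ (the paper calls it $i$) via restriction to every open set, check that the image is a $0$-approximate section using the composition clause of Axiom~5, and in the metric case build the inverse by projection onto the $X$-component (the paper's $j$) together with the observation that $d_V(a(V),\shf{S}(V\subseteq X)a(X))=0$ forces equality when $d_V$ is a genuine metric.

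Where you differ is in rigor regarding the word \emph{homeomorphism}. The paper's proof stops once $i\circ j$ and $j\circ i$ are shown to be identities, i.e.\ it establishes only a bijection and does not verify continuity of either map. You correctly note that $\Phi^{-1}$ is $1$-Lipschitz for free, and you flag the genuine issue with $\Phi$: continuity of each $\shf{S}(U\subseteq X)$ does not by itself control the supremum $D(\hat{s},\hat{t})$ unless $\col{T}$ is finite or the family of restrictions is equicontinuous. This observation is sound and is a point the paper glosses over; your appeal to the paper's standing finiteness convention is the right fix in context.
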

\begin{proof}
To prove the first statement, it suffices to construct the other components in the product $\prod_{U\in\col{T}}\shf{S}(U)$ by applying each restriction $\shf{S}(U \subseteq X)$ to $s$ for $U \in \col{T}$.  Since $d_U$ is a pseudometric, the distance between each such component will vanish by construction.

Observe that this defines an injective function $i$ taking global sections to 0-approximate sections.  To prove the second statement, we need only construct an inverse to $i$ when $\shf{S}$ is a presheaf of metric spaces.  To that end, suppose that $s$ is instead a 0-approximate section of $\shf{S}$.  Since $s$ is an element of the product $\prod_{U\in\col{T}}\shf{S}(U)$, we can apply the projection
\begin{equation*}
j: \prod_{U\in\col{T}}\shf{S}(U) \to \shf{S}(X)
\end{equation*}
to obtain a global section $j(s)=s(X)$ since $X \in \col{T}$.  It is clear that $j \circ i$ is the identity function.  Since we have that $s$ is a 0-approximate section, then 
\begin{eqnarray*}
0 &=& d_V(s(V),\shf{S}(V\subseteq X)s(X))\\
&=&d_V(s(V),\shf{S}(V\subseteq X)j(s))\\
&=&d_V(s(V),i(j(s))_V)\\
\end{eqnarray*}
for all $V \in \col{T}$, which implies that $s(V) = i(j(s))_V$ because $d_V$ is a metric.  Therefore $i\circ j$ is the identity function, and the proof is complete.
\end{proof}

\begin{remark}
The technical distinction between the 0-approximate sections of presheaves of metric spaces and presheaves of pseudometric spaces runs quite deep.  Since the space of 0-approximate sections is closed, the subspace of global sections is also closed in a presheaf of metric spaces, but not necessarily so in a presheaf of pseudometric spaces.  Consider the topology $\col{T}=\{\emptyset,\{1\},\{1,2\}\}$ on the two-point set $X=\{1,2\}$ and the presheaf $\shf{P}$ defined by the diagram
\begin{equation*}
\xymatrix{
\shf{P}(\{1,2\}) = \{a,b\} \ar[r]^{\id} & \shf{P}(\{1\}) = \{a,b\}.
}
\end{equation*}
There exactly two global sections of $\shf{P}$, yet if the trivial pseudometric\footnote{All attributes are distance 0 apart in the \emph{trivial pseudometric}.} is applied to all sensor domains, then there are four 0-approximate sections.  Each of these approximate sections are limit points of the subspace of global sections, which shows that it is not closed.  (As the reader may verify, $\shf{P}$ also satisfies Axiom 6, it is therefore a sheaf.)
\end{remark}

The consistency radius of an assignment is an obstruction to it being a global section, as the following Proposition asserts.

\begin{proposition}
  If $a$ is an $\epsilon$-approximate section of a presheaf $\shf{P}$ of pseudometric spaces whose restrictions are Lipschitz continuous functions with Lipschitz constant $K$, then the distance (Definition \ref{df:assignment_pmetric}) between $a$ and the nearest global section to $a$ is at least $\frac{\epsilon}{1+K}$.
\end{proposition}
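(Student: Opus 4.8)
The plan is to bound the consistency radius of $a$ from above by a constant multiple of its distance to a global section, and then simply invert that inequality. First I would fix an arbitrary global section $s \in \shf{P}(X)$ and let $b$ be the $0$-approximate section it induces through Proposition \ref{prop:global_to_approx}, so that $b(U) = \left(\shf{P}(U \subseteq X)\right)(s)$ for every $U \in \col{T}$. Write $\delta = D(a,b) = \sup_{U \in \col{T}} d_U(a(U),b(U))$ for the distance between the assignment and this global section; since $s$ was arbitrary, it will suffice to show $\delta \ge \frac{\epsilon}{1+K}$, as this then applies in particular to the nearest global section.

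The heart of the argument is estimating $d_V(a(V), \left(\shf{P}(V\subseteq U)\right)(a(U)))$ for an arbitrary pair $V \subseteq U$. By the triangle inequality in the pseudometric space $\shf{P}(V)$,
\begin{equation*}
d_V\left(a(V), \left(\shf{P}(V\subseteq U)\right)(a(U))\right) \le d_V\left(a(V), b(V)\right) + d_V\left(b(V), \left(\shf{P}(V\subseteq U)\right)(a(U))\right).
\end{equation*}
The first summand is at most $\delta$ by the definition of $\delta$. For the second, I would invoke the fact that $b$ is a genuine $0$-approximate section: the composition law for restrictions in Axiom 5 gives $b(V) = \left(\shf{P}(V \subseteq U)\right)(b(U))$ \emph{exactly}, so the second summand equals $d_V(\left(\shf{P}(V\subseteq U)\right)(b(U)), \left(\shf{P}(V\subseteq U)\right)(a(U)))$. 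Applying the Lipschitz bound with constant $K$ to the restriction $\shf{P}(V \subseteq U)$ controls this by $K\, d_U(b(U),a(U)) \le K\delta$. Adding the two pieces yields $d_V(a(V), \left(\shf{P}(V\subseteq U)\right)(a(U))) \le (1+K)\delta$ for every $V \subseteq U$.

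Taking the supremum over all pairs $V \subseteq U$ shows that $a$ is a $(1+K)\delta$-approximate section, so its consistency radius — the least value for which $a$ is an approximate section — is at most $(1+K)\delta$. Reading $\epsilon$ as this consistency radius, we obtain $\epsilon \le (1+K)\delta$, that is, $\delta \ge \frac{\epsilon}{1+K}$, which is the claim. The one step that genuinely requires care — and the only place the hypotheses truly enter — is the middle paragraph: one must observe that the induced section $b$ restricts \emph{exactly} (it has consistency radius zero), so that the Lipschitz constant can transport the global closeness $d_U(a(U),b(U)) \le \delta$ on the larger domain down to the discrepancy measured on $V$. I would also flag that the statement is sharp only when $\epsilon$ is taken to be the consistency radius of $a$ rather than an arbitrary larger bound, since a global section itself is trivially an $\epsilon$-approximate section for every $\epsilon \ge 0$; everything else is routine triangle-inequality bookkeeping.
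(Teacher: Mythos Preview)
Your proof is correct and follows essentially the same route as the paper: insert the global section via the triangle inequality, use that it restricts exactly so the intermediate term vanishes, and apply the Lipschitz bound to the remaining piece. Your version is in fact slightly cleaner, since you bound each summand directly by $\delta = D(a,b)$ before taking the supremum, whereas the paper writes the pointwise inequality $\le (1+K)\,d_V(a(V),s(V))$ (which is not quite right, as one of the two terms is really $K\,d_U(a(U),s(U))$) and only recovers the correct bound after passing to the supremum.
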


Assignments that are highly consistent (they have low consistency radius) are therefore easier to fuse via an optimization like Problem \ref{prob:fusion}.

\begin{proof}
  Suppose that $s$ is any global section of $\shf{P}$ and that $V \subseteq U \in \col{T}$ are open sets.  Then the consistency radius is the supremum of all distances of the form
  \begin{eqnarray*}
    d_V(a(V),\shf{P}(V \subseteq U)a(U)) &\le & d_V(a(V),\shf{P}(V \subseteq U)s(U)) + \\ && d_V(\shf{P}(V\subseteq U)s(U),\shf{P}(V\subseteq U)a(U))\\
    &\le&d_V(a(V),s(V)) + d_V(s(V),\shf{P}(V \subseteq U)s(U)) + \\ && d_V(\shf{P}(V\subseteq U)s(U),\shf{P}(V\subseteq U)a(U))\\
    &\le&d_V(a(V),s(V)) + d_V(\shf{P}(V\subseteq U)s(U),\shf{P}(V\subseteq U)a(U))\\
    &\le&(1+K)d_V(a(V),s(V)).
  \end{eqnarray*}
  Thus, taking supremums of both sides yields
  \begin{equation*}
    \epsilon \le (1+K) D(a,s),
  \end{equation*}
  which completes the argument.
\end{proof}

\begin{table}
  \begin{center}
        \caption{Table of observations for Example \ref{eg:sar_data}.}
    \label{tab:sar_data}

    \begin{tabular}{|c|c|c|c|c|c|}
      \hline
      Sensor&Entity&Case 1&Case 2& Case 3&Units\\
      \hline
      Flight plan&$x$&70.662&70.663&70.612&$^\circ$W\\
      &$y$&42.829&42.752&42.834&$^\circ$N\\
      &$z$&11178&11299&11237&m\\
      \hline
      ATC&$x$&70.587&70.657&70.617&$^\circ$W\\
      &$y$&42.741&42.773&42.834&$^\circ$N\\
      &$z$&11346&11346&11236&m\\
      &$v_x$&-495&-495&-419&km/h W\\
      &$v_y$&164&164&310&km/h N\\
      \hline
      RDF 1&$\theta_1$&77.1&77.2&77.2&$^\circ$ true N\\
      &$t$&0.943&0.930&0.985&h\\
      \hline
      RDF 2&$\theta_2$&61.3&63.2&63.3&$^\circ$ true N\\
      &$t$&0.890&0.974&1.05&h\\
      \hline
      Sat&$s$&(image)&&&\\
      &$s_x$&64.599&64.630&62.742&$^\circ$W\\
      &$s_y$&44.243&44.287&44.550&$^\circ$N\\
      \hline
      Field&$x$&70.649&70.668&70.626&$^\circ$W\\
      &$y$&42.753&42.809&42.814&$^\circ$N\\
      &$z$&11220&11431&11239&m\\
      &$v_x$&-495&-495&-419&km/h W\\
      &$v_y$&164&164&311&km/h N\\
      &$t$&0.928&1.05&1.02&h\\
      \hline
      Crash est.&$x$&65.0013&64.2396&65.3745&$^\circ$W\\
      &$y$&44.1277&44.3721&45.6703&$^\circ$N\\
      \hline
      Consist. rad.&&15.7&11.6&152&km\\
      Error&&16.1&17.3&193&km\\
      \hline
    \end{tabular}
  \end{center}
\end{table}

\begin{table}
  \begin{center}
        \caption{Fused results for Example \ref{eg:sar_data}}
    \label{tab:sar_data_fused}

    \begin{tabular}{|c|c|c|c|c|c|}
      \hline
      Sensor&Entity&Case 1&Case 2& Case 3&Units\\
      \hline
      Field&$x$&70.9391&71.8296&75.7569&$^\circ$W\\
      &$y$&42.7849&42.7806&42.5831&$^\circ$N\\
      &$z$&10963&11730&12452&m\\
      &$v_x$&-493.6&-497.2&-436.6&km/h W\\
      &$v_y$&168.6&162.2&280.9&km/h N\\
      &$t$&0.952&1.04&0.872&h\\
      \hline
      Crash est.&$x$&65.1704&65.4553&71.0939&$^\circ$W\\
      &$y$&44.2307&44.3069&44.7919&$^\circ$N\\
      \hline
      Error&&2.01&8.38&74.4&km\\
      \hline
    \end{tabular}
  \end{center}
\end{table}

\begin{figure}
  \begin{center}
    \includegraphics[width=5in]{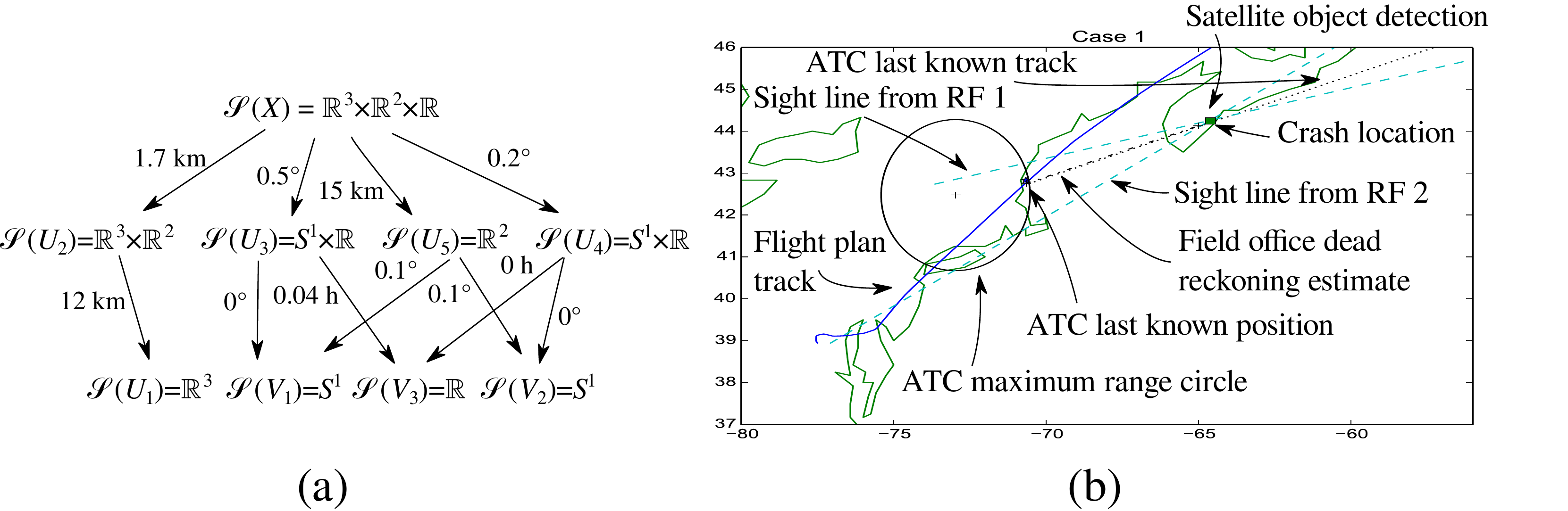}
    \caption{Case 1 in Example \ref{eg:sar_data}: (a) Errors in the presheaf model (b) Spatial layout.}
    \label{fig:sar_data1}
  \end{center}
\end{figure}

\begin{figure}
  \begin{center}
    \includegraphics[width=5in]{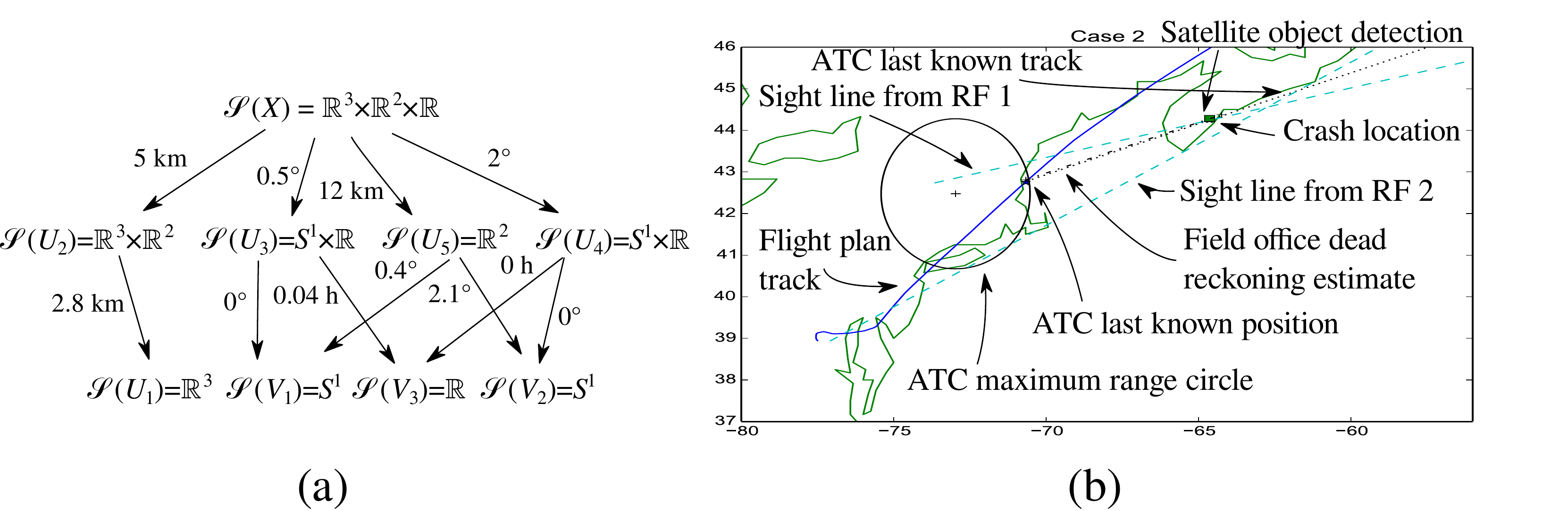}
    \caption{Case 2 in Example \ref{eg:sar_data}: (a) Errors in the presheaf model (b) Spatial layout.}
    \label{fig:sar_data2}
  \end{center}
\end{figure}

\begin{figure}
  \begin{center}
    \includegraphics[width=5in]{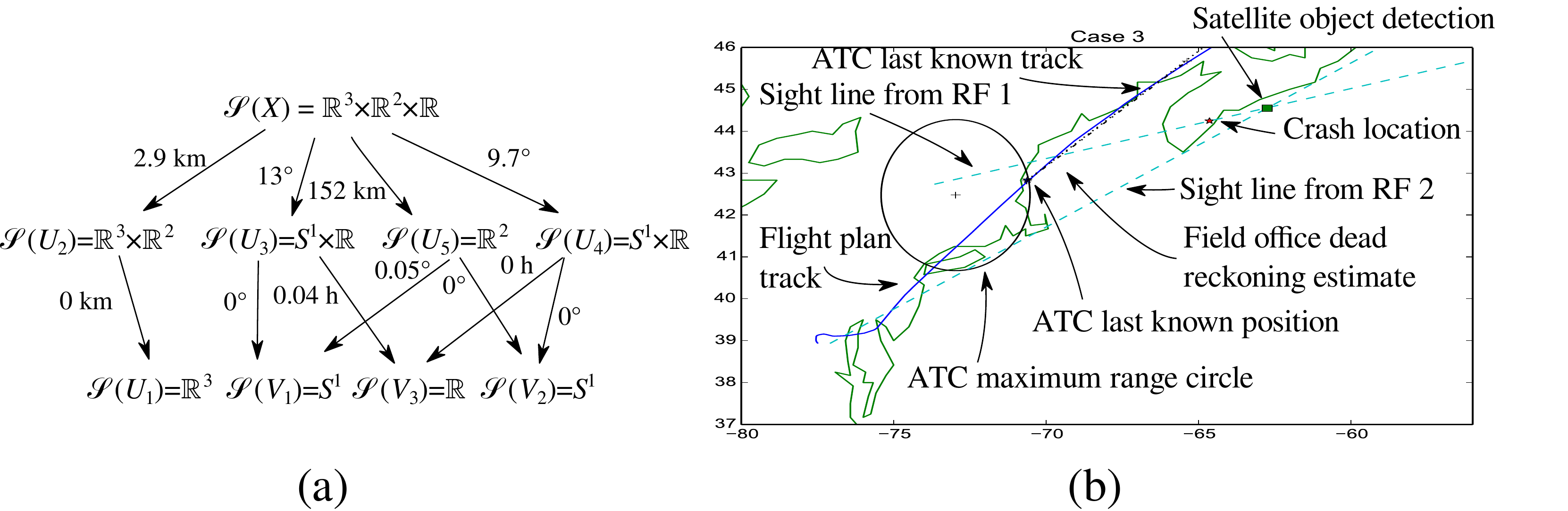}
    \caption{Case 3 in Example \ref{eg:sar_data}: (a) Errors in the presheaf model (b) Spatial layout.}
    \label{fig:sar_data3}
  \end{center}
\end{figure}

\begin{example}
  \label{eg:sar_data}
  Consider again the search and rescue scenario, recalling that we have constructed a sheaf model for the integrated sensor system in Examples \ref{eg:sar_rest} and \ref{eg:sar_full_def}.  The data shown in Table \ref{tab:sar_data} and Figures \ref{fig:sar_data1}--\ref{fig:sar_data3} were simulated to represent typical values for the sensors we deployed.  To constitute the flight plan, we used an actual flight path \cite{LH417} for Lufthansa flight 417.  Locations and operating ranges of ATC radars were taken from Air Force document AFD-090615-023.  The crash location was specified manually at $44.24545^\circ$ N, $64.63672^\circ$ W.  The crash location was withheld from the data presented to the sheaf model, so that the output of its data fusion could be compared to the true value.  The two RDF sensors were placed at
\begin{itemize}
\item The home location for the Rensselaer Polytechnic Amateur radio club station W2SZ (the author was formerly a member) $42.7338328^\circ$ N, $73.662574^\circ$ W, in upstate New York and
\item The author's office at the American University Mathematics and Statistics department $38.9352387^\circ$ N, $77.0897374^\circ$ W in downtown Washington, DC.
\end{itemize}
We simulated three distinct cases, in which different errors were applied to the data.  Measurements were perturbed by random errors that are representative of typical sensors.

We then solved Problem \ref{prob:fusion} by searching over the possible global sections to find the one closest (in the pseudometric from Definition \ref{df:assignment_pmetric}) to each case.  We used the Nelder-Mead simplex algorithm from the {\tt scipy.optimize} \cite{scipyopt} toolbox for this optimization.  The results are shown in Table \ref{tab:sar_data_fused}.  The error between the predicted crash location (using dead reckoning from the last known position and velocity) improved rather dramatically by fusing the data.

\begin{description}
\item[Case 1:] (See Figure \ref{fig:sar_data1}) This case was constructed so that there was only small error present in all measurements.  Applying dead reckoning to the Field office's data results in a crash location estimate that is $16.1$ km from the actual crash location.  Examining Figure \ref{fig:sar_data1}(b), it is clear that a crucial factor in obtaining this estimate was that the ATC measurement of the velocity vector and the RDF sensors' estimate of the crash time were both reliable.  Figure \ref{fig:sar_data1}(a) shows that consistency radius of the assignment corresponding to this Case is about $15.7$ km, which is fairly close to the actual error.  A closer examination shows that nearly all the error is concentrated in two places: between the ATC ($U_2$) and flight plan ($U_1$) and between the Field office ($X$) and the satellite detection ($U_5$).  The fused result exhibits markedly reduced error (a factor of $8\times$ reduced), largely since all of the errors were independent and unbiased. 

\item[Case 2:] (See Figure \ref{fig:sar_data2}) This case uses nearly the same setup as Case 1, but the RDF sensor 2 (from the author's office) was biased southward by $2^\circ$.  This error is immediately detectable in Figure \ref{fig:sar_data2}(a) as an anomalously high error between the Field office ($X$) and the RDF sensor 2 ($U_4$).  Since the only high errors occur between sensor domains involving RDF sensor 2 ($U_4$ and $V_2$), the sheaf model makes it clear that the errors are due exclusively to RDF sensor 2.  It should be noted that Figure \ref{fig:sar_data2}(a) does not include any \emph{a priori} knowledge of the sensor bias.  In this Case, the Field office did not use the RDF sensors for precise location -- only timing -- so its estimate by dead reckoning is only off by $17.3$ km from the actual crash location.  The consistency radius for this case is slightly smaller, at $11.6$ km.  This situation is a classic example of the power of data fusion, since the fused result shows that the error due to the biased RDF sensor is essentially eliminated by locating the nearest global section.  The fused result exhibits around a factor of $2\times$ reduction in error.

\item[Case 3:] (See Figure \ref{fig:sar_data3}) This case retains the RDF sensor 2 bias from Case 2, but also incorrectly records the last known velocity vector as being the one specified by the original flight plan.  This is fairly realistic as target heading estimates near the edge of the usable range of a radar are often quite noisy.  The consistency radius is easily read from Figure \ref{fig:sar_data3}(a) as $152$ km.  This is definitely an indication that something is amiss, given that the error incurred by dead reckoning is visible on Figure \ref{fig:sar_data3}(c) and is $193$ km.  Even so, the fused result is considerably closer to the actual crash location, exhibiting somewhat over a factor of $2\times$ lower error.
\end{description}

\end{example}

\section{Analysis of integrated sensor systems}
\label{sec:cech}

In his Appendix on sheaf theory, Hubbard states \cite{Hubbard} ``It is fairly easy to understand what a sheaf is, especially after looking at a few examples.  Understanding what they are good for is rather harder; indeed, without cohomology theory, they aren't good for much.''  Although the previous sections of this article advocate otherwise, Hubbard makes an important point.  Although there are many co-universal information representations (of which sheaves are one), the stumbling block for integration approaches is the lack of theoretically-motivated computational approaches.  What sheaf theory provides is a way to perform \emph{controlled algebraic summarization} -- a weaker set of invariants that still permit certain kinds of inference.  In order to perform these summarizations, the attributes must be specially encoded, which leads to our final Axiom.

\begin{description}
\item[Axiom 7:] Each space of observations $\shf{S}(U)$ has the structure of a Banach space (complete normed vector space) and each restriction $\shf{S}(U \subseteq V)$ is a continuous linear transformation of this Banach space.  We call $\shf{S}$ a \emph{sheaf of Banach spaces}.
\end{description}

\begin{proposition}
  \label{prop:banach_fusion}
  If $\shf{S}$ is a sheaf of Banach spaces on a finite topology $\col{T}$, then Problem \ref{prob:fusion} always has a unique solution, embodied as a projection from the space of assignments to the space of global sections.
\end{proposition}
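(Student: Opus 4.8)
The plan is to recast Problem \ref{prob:fusion} as a best-approximation problem inside a single Banach space and then exploit convexity. First I would assemble the ambient space: since $\col{T}$ is finite, the space of assignments $A=\prod_{U\in\col{T}}\shf{S}(U)$ is a finite product of Banach spaces, hence itself Banach, and by Definition \ref{df:assignment_pmetric} the pseudometric $D$ is the metric induced by the supremum (here a maximum) of the coordinate norms. The embedding $i:\shf{S}(X)\to A$ of Proposition \ref{prop:global_to_approx}, which sends a global section $s$ to the assignment $\left(\shf{S}(U\subseteq X)s\right)_{U\in\col{T}}$, is linear and bounded because each restriction is continuous and there are only finitely many of them. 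Its image $G=i(\shf{S}(X))$ is exactly the set of global sections sitting inside $A$, so Problem \ref{prob:fusion} is the problem of finding the point of $G$ nearest to a given assignment $a$.

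The next step is to verify that $G$ is a closed linear subspace, so that ``nearest point'' is well posed. Because $X\in\col{T}$ and $\shf{S}(X\subseteq X)=\id$, the $X$-coordinate of $i(s)$ is $s$ itself; hence $\|i(s)\|_A\ge\|s\|_{\shf{S}(X)}$, so $i$ is bounded below. A bounded-below bounded linear injection between Banach spaces is a linear homeomorphism onto its image, and the image is therefore complete and closed in $A$. Consequently the objective $s\mapsto D(a,i(s))=\|a-i(s)\|_A$ is continuous, convex (a supremum of norms composed with affine maps), and coercive, the coercivity following from $\|a-i(s)\|_A\ge\|s\|_{\shf{S}(X)}-\|a\|_A$.

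Existence of a minimizer then follows in the reflexive case --- in particular for finite-dimensional or Hilbert factors --- from coercivity together with weak compactness of the convex sublevel sets and weak lower semicontinuity of the objective. The step I expect to be the main obstacle is uniqueness together with the identification of the fusion map as a genuine linear idempotent projection $P:A\to G$. Convexity alone only forces the minimizers to form a convex set; collapsing that set to a single point requires strict convexity of the ambient norm, which the supremum norm underlying $D$ never has. Indeed one can already build a two-open-set sheaf (take $\shf{S}(\{1\})=\shf{S}(\{2\})=\mathbb{R}$, $\shf{S}(X)=\mathbb{R}^2$ with the maximum norm, restrictions the coordinate projections) for which the set of nearest global sections to a fixed assignment is a nondegenerate segment. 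The clean remedy --- and the one that makes the word ``projection'' literally accurate --- is to aggregate the coordinate norms with an inner product rather than a supremum, giving $A$ a Hilbert structure; then $G$ is a closed subspace of a Hilbert space, the orthogonal projection theorem supplies a unique nearest global section, and the fusion map is exactly the bounded linear orthogonal projection onto $G$. I would therefore establish closedness and existence in the stated Banach generality and make explicit that uniqueness and the projection property are obtained from this inner-product refinement of Axiom 7, reconciling it with the supremum-based $D$ of Definition \ref{df:assignment_pmetric}.
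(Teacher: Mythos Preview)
Your analysis is considerably more careful than the paper's own argument, and in fact you have put your finger on a genuine difficulty that the paper glosses over.  The paper's proof consists of a single observation: the finite product $\prod_{U\in\col{T}}\shf{S}(U)$ equipped with the sup norm is a Banach space, and this norm induces the pseudometric $D$ of Definition~\ref{df:assignment_pmetric}.  That is literally all that is written; neither existence of a nearest global section nor uniqueness is addressed, and the word ``projection'' is left uninterpreted.

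You go much further.  You establish that the image $G$ of $\shf{S}(X)$ in the assignment space is a closed linear subspace (via the bounded-below argument using the $X$-coordinate), which is the right ingredient for existence.  More importantly, you correctly observe that the sup norm is never strictly convex, so uniqueness of the nearest point cannot be expected in general, and your two-open-set counterexample is valid: with $\col{T}=\{\emptyset,\{1\},\{2\},X\}$, $\shf{S}(\{1\})=\shf{S}(\{2\})=\mathbb{R}$, $\shf{S}(X)=\mathbb{R}^2$ with the max norm, and coordinate projections as restrictions, the assignment $a=(0,0,(2,0))$ has nearest global sections $(1,s_2)$ for every $s_2\in[-1,1]$.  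This shows that Proposition~\ref{prop:banach_fusion}, read literally with the metric $D$ of Definition~\ref{df:assignment_pmetric}, is false as stated.

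Your proposed repair---replacing the sup aggregation by an inner-product (Hilbert) structure so that orthogonal projection gives both uniqueness and a genuine linear idempotent---is the standard and correct fix, and it is the only reading under which the phrase ``embodied as a projection'' acquires its usual meaning.  So rather than there being a gap in your proposal, you have identified and diagnosed a gap in the paper; your write-up should simply make explicit that the Hilbert refinement is a strengthening of Axiom~7 needed for the proposition to hold, not merely a convenience.
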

\begin{proof}
  One need only observe that the metric $D$ on assignments (Definition \ref{df:assignment_pmetric}) makes the space of assignments into a Banach space.  Specifically, if $a \in \prod_{U \in \col{T}}\shf{S}(U)$ is an assignment of $\shf{S}$, let
\begin{equation*}
\|a\| = \max_{U \in \col{T}} \|a(U)\|_{\shf{S}(U)},
\end{equation*}
where $\|\cdot\|_{\shf{S}(U)}$ is the norm on $\shf{S}(U)$.  The maximum exists since $\col{T}$ is finite.  Then the metric $D$ is given by
\begin{eqnarray*}
\|a-b\| &=& \max_{U \in \col{T}} \|a(U)-b(U)\|_{\shf{S}(U)}\\
 &=& \max_{U \in \col{T}} d_U(a(U),b(U))\\
 &=& D(a,b).
\end{eqnarray*}
\end{proof}

There are a number of additional benefits once Axiom 7 is satisfied, for instance,
\begin{enumerate}
\item The space of global sections can be computed directly using linear algebra (Proposition \ref{prop:cohomology_global}), which greatly simplifies solving the Problem \ref{prob:fusion}, the data fusion problem,
\item There is a canonical collection of invariants, against which all others can be compared (Definition \ref{def:cech_cohomology}), and
\item We need not construct sets of observations over the unions of sensor domains in order to access these invariants (Theorem \ref{thm:leray}).
\end{enumerate}

\subsection{Preparation: Stochastic observations}
Although Axiom 7 usually does permit reasonable options for restrictions, it may also be highly constraining.  Many effective algorithms that could be used as restrictions are not linear.  Although this seems to dramatically limit the effectiveness of our subsequent analysis, there is a way out that actually increases the expressiveness of the model.  Specifically, a non-linear restriction map can be ``enriched'' into one that is linear \cite{RobinsonLogic, Purvine_JMM2016}.  The most concrete of these enrichments lifts the entities and attributes into a stochastic modeling framework, as the next Example shows.

\begin{figure}
  \begin{center}
    \includegraphics[width=4in]{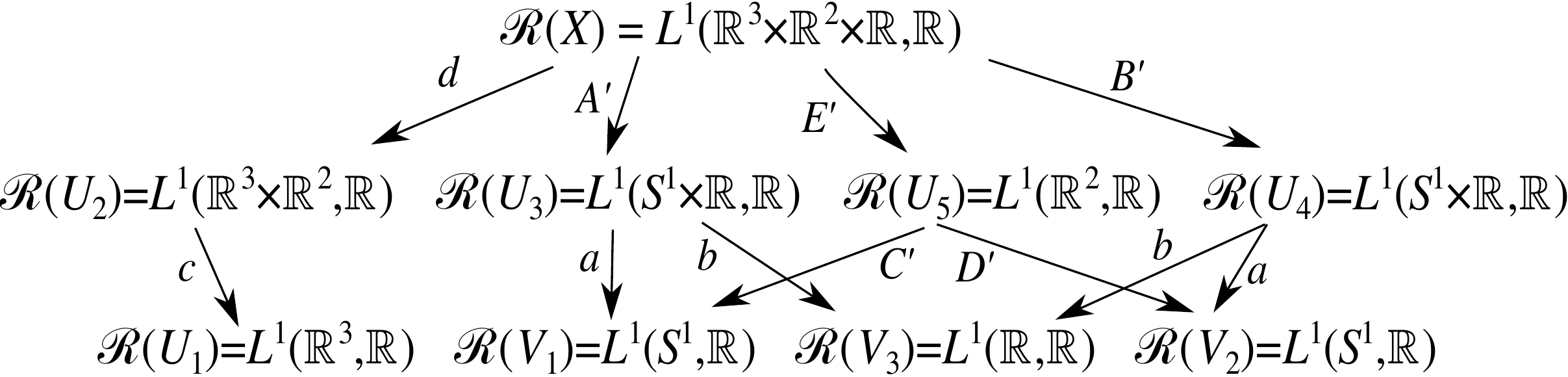}
    \caption{The sheaf in the search and rescue example written using linear stochastic maps.}
    \label{fig:sar_categorified}
  \end{center}
\end{figure}

\begin{figure}
  \begin{center}
    \includegraphics[width=4in]{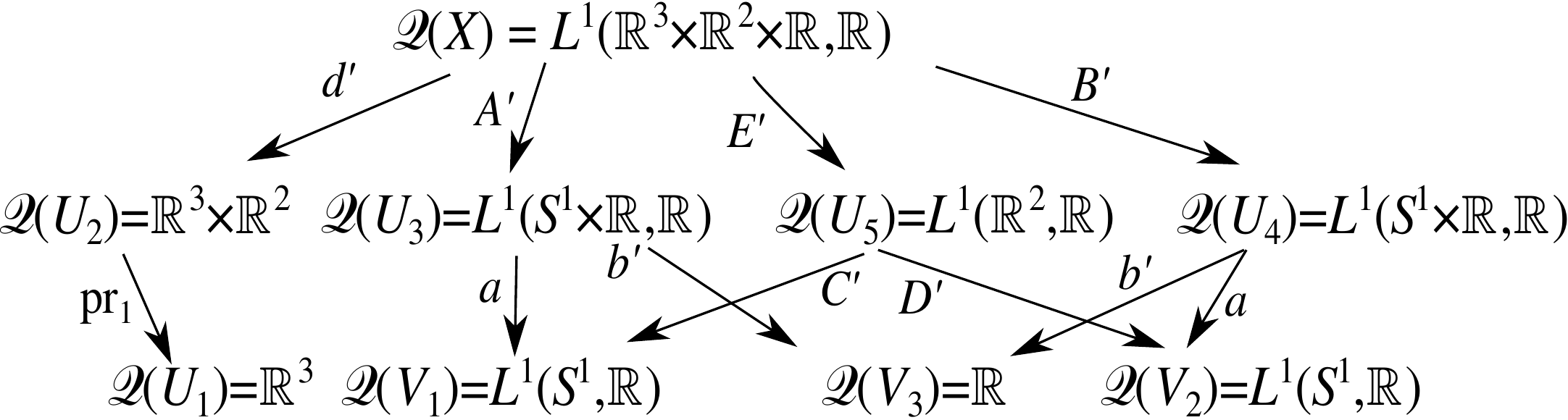}
    \caption{The sheaf in the search and rescue example written using linear stochastic maps and means.}
    \label{fig:sar_categorified2}
  \end{center}
\end{figure}

\begin{example}
  \label{eg:sar_categorified}
The sheaf model $\shf{S}$ defined for the search and rescue scenario in Example \ref{eg:sar_full_def} does not satisfy Axiom 7, since the restrictions $A$, $B$, $C$, $D$, and $E$ are not linear transformations.  A different sheaf model $\shf{R}$ can instead be defined that does satisfy Axiom 7, by reinterpreting the attributes as probability distributions.  The restrictions then all become stochastic maps, all of which are linear.  

The sheaf $\shf{R}$ is built upon the same topological space $(X,\col{T})$ as $\shf{S}$, and each space of observations over an open set open set $U\in\col{T}$ is replaced by the recipe
\begin{equation*}
\shf{R}(U) = L^1(\shf{S}(U),\mathbb{R}),
\end{equation*}
where $L^1(M,\mathbb{R})$ is the space of real-valued integrable functions on a measure space $M$.  We interpret $L^1(M,\mathbb{R})$ as the vector space generated by the probability distributions on $M$.  

The appropriate sheaf diagram of $\shf{R}$ is shown in Figure \ref{fig:sar_categorified}, for which we now describe the restrictions.  Every restriction of $\shf{S}$ translates into a linear, stochastic map in $\shf{R}$ according to the following procedure, which we state generally first and then specialize.  Using an arbitrary measurable function $f:M \to N$, we can define a function $F:L^1(M,\mathbb{R}) \to L^1(N,\mathbb{R})$ by the following formula for each $u\in L^1(M,\mathbb{R})$ and $y\in N$
\begin{equation}
\label{eq:lift}
\left(F(u)\right)(y) = \int_{f^{-1}(\{y\})} u(x) \; dx = \int_M u(x) \delta(y-f(x))\; dx,
\end{equation}
where $\delta$ is the Dirac distribution.  If the volume of $M$ is 1, so that
\begin{equation*}
\int_M dx = 1,
\end{equation*}
then \eqref{eq:lift} is precisely Bayes' rule, and $F$ can be shown to be a stochastic linear map acting on $u$, by an appeal to Fubini's theorem.  Specifically, if $u$ is nonnegative and has integral 1, then $F(u)$ will also have these properties.

In the case of the restrictions $a$--$d$ in Figure \ref{fig:sar_categorified} for $\shf{R}$, all of which are projectors of various sorts, Equation \ref{eq:lift} specializes to
\begin{eqnarray*}
\left(a (u)\right) (\theta) &=& \int_{-\infty}^\infty u(\theta,t) \; dt,\\ 
\left(b (u)\right) (t) &=& \int_{0}^{2\pi} u(\theta,t) \; d\theta,\\ 
\left(c (u)\right) (x,y,z)&=& \int_{-\infty}^\infty \int_{-\infty}^\infty u(x,y,z,v_x,v_y) \; dv_x\, dv_y, \text{ and}\\
\left(d (u)\right)(x,y,z,v_x,v_y) &=& \int_{-\infty}^\infty u(x,y,z,v_x,v_y,t)\; dt,\\
\end{eqnarray*}
where we have used $u$ to represent a typical function in the domain of each restriction.

Of course, we must also define $A'$, $B'$, $C'$, $D'$, and $E'$ using Equation \ref{eq:lift}, replacing $f$ with each restriction from $\shf{S}$, so we obtain
\begin{eqnarray*}
  \left(A'(u)\right)(\theta_1,t) &=& \int\int\int\int_0^\infty u(r\sin\theta_1 + r_{1x}-v_xt,r\cos\theta_1+r_{1y}-v_yt,z,v_x,v_y,t)\; r\, dr\, dz\,dv_x\,dv_y,\\
\left(B'(u)\right)(\theta_2,t) &=& \int\int\int\int_0^\infty u(r\sin\theta_2 + r_{2x}-v_xt,r\cos\theta_2+r_{2y}-v_yt,z,v_x,v_y,t)\; r\, dr\, dz\,dv_x\,dv_y,\\
  \left(C'(u)\right)(\theta_1) &=& \int_0^\infty u(r\sin\theta_1 + r_{1x},r\cos\theta_1+r_{1y})\; r\, dr,\\
  \left(D'(u)\right)(\theta_2) &=& \int_0^\infty u(r\sin\theta_2 + r_{2x},r\cos\theta_2+r_{2y})\; r\, dr,\\
  \left(E'(u)\right)(s_x,s_y) &=& \int\int\int\int u(s_x-v_xt,s_y-v_yt,z,v_x,v_y,t)\;dz\,dv_x\,dv_y\,dt,
\end{eqnarray*}
where $(s_x,s_y)$ are coordinates of an object detected in the satellite image, $(r_{1x},r_{1y})$ are the coordinates of the first RDF sensor and $(r_{2x},r_{2y})$ are the coordinates of the second RDF sensor.

It is not strictly necessary to retain probability distributions everywhere in the sheaf model.  Portions of the sheaf in which there were already linear restrictions can be retained undisturbed by extracting the mean values of the distributions.  This process is not unique, but one possibility is shown in Figure \ref{fig:sar_categorified2}, where 
\begin{eqnarray*}
b'(u) &=& \int_{-\infty}^\infty t \left(b(u)\right)(t)\; dt \\
&=& \int_{-\infty}^\infty \int_{0}^{2\pi} t u(\theta,t) \; d\theta\, dt,\\ 
\left(d' (u)\right)(x,y,z,v_x,v_y) &=& \left( \int\int\int\int\int x \left(d (u)\right)(x,y,z,v_x,v_y)\; dx\,dy\,dz\,dv_x\,dv_y, \dotsc\right).
\end{eqnarray*}
\end{example}

\subsection{Computing cohomology}

The sheaf $\shf{S}$ of vector spaces that results from Axiom 7 has a canonical family of invariants called \emph{cohomology groups} $H^k(\shf{S})$ for $k=0,1,\dotsc$.  In the most general setting, the cohomology groups can be constructed in various ways using linear algebraic manipulations, most notably the \emph{canonical} or \emph{Godement resolution} \cite{Godement_1958}\cite[II.2]{Bredon}.  Instead, we will use a more practical construction, called \emph{\v{C}ech cohomology} \cite{Hubbard,Bredon}.

\begin{definition}
  \label{def:cech_cohomology}
  Suppose $\col{U}=\{U_1, \dotsc, U_n\}$ is a finite open cover for a topological space $(X,\col{T})$, which means that $\col{U} \subseteq \col{T}$ and $\cup \col{U} = X$.  Given a sheaf $\shf{S}$ of Banach spaces on $\col{T}$, the \emph{$k$-cochain spaces of $\shf{S}$ subject to $\col{U}$} are given by the Cartesian products of the stalks over $(k+1)$-way intersections of sets in $\col{U}$
  \begin{equation*}
    C^k(\col{U};\shf{S}) = \prod_{i_0 < i_1 \dotsb i_k} \shf{S}(U_{i_0} \cap \dotsc U_{i_k}).
  \end{equation*}
  Notice that the coordinates of elements of $C^k(\col{U};\shf{S})$ are indexed by lists of $(k+1)$ increasing integers to eliminate duplications.  

Beware that the $k$ on $C^k$ is merely an index, and that $C^k$ does \emph{not} mean the space of $k$-times continuously differentiable functions!
  
  We also define the \emph{$k$-coboundary map}, which is a function $d^k : C^k(\col{U};\shf{S}) \to C^{k+1}(\col{U};\shf{S})$.  This function is defined by specifying its coordinates on an element $a \in C^k(\col{U};\shf{S})$.
  \begin{equation*}
    d^k(a)_{i_0 < \dotsb i_{k+1}} = \sum_{j=0}^{k+1} (-1)^j \shf{S}(U_{i_0} \cap \dotsb U_{i_{k+1}} \subseteq U_{i_0} \cap \dotsb \widehat{U_{i_j}} \dotsb U_{i_{k+1}}) a_{i_0 < \dotsb \widehat{i_j} \dotsb i_{k+1}},
  \end{equation*}
  where the hat indicates omission from a list.

  It is a standard fact that $d^{k+1} \circ d^k = 0$, from which we can make the definition of \emph{the $k$-cohomology spaces}
  \begin{equation*}
    H^k(\col{U};\shf{S}) = \text{ker } d^k / \text{image } d^{k+1}.
  \end{equation*}
  We sometimes write $H^k(\shf{S})$ for $H^k(\col{T};\shf{S})$ to reduce notational clutter.
\end{definition}

Although computing $H^k(\col{T};\shf{S})$ is a bit difficult, simply due to the number of dimensions of $C^k(\col{T};\shf{S})$, the first consequence of this definition is that the zeroth cohomology classes are the global sections.  

\begin{proposition} \cite[Thm 4.3]{TSPbook}
  \label{prop:cohomology_global}
  If $\shf{S}$ is a sheaf of vector spaces on a finite topology $\col{T}$, then  $H^0(\col{T};\shf{S})=\ker d^0$ consists precisely of those assignments $s$ which are global sections of $\shf{S}$.
\end{proposition}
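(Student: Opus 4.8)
The plan is to produce an explicit linear isomorphism between $\ker d^0$ and the space of global sections $\shf{S}(X)$, assembled from the restriction maps of $\shf{S}$. The first move is pure bookkeeping: when the cover is all of $\col{T}$, the $0$-cochain space $C^0(\col{T};\shf{S})=\prod_{U\in\col{T}}\shf{S}(U)$ is exactly the space of assignments of Definition \ref{df:assignment_pmetric}, and because there is no lower cochain group whose image could be quotiented out, $H^0=\ker d^0$ sits inside the assignments as the statement already records. So the real task is to characterize which assignments lie in $\ker d^0$.

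Next I would unwind the coboundary. Writing out $d^0$ from Definition \ref{def:cech_cohomology} on an assignment $a$ shows that $d^0(a)=0$ amounts to the family of equations
\[
\shf{S}(U\cap V\subseteq U)\,a_U=\shf{S}(U\cap V\subseteq V)\,a_V,\qquad \text{for all } U,V\in\col{T},
\]
which is precisely conjunctivity on pairwise intersections. The key structural input is then that $X$ is itself a member of the cover $\col{T}$. Specializing the displayed equation to $V=X$ and using $U\cap X=U$ together with $\shf{S}(U\subseteq U)=\id$ (Axiom 5) yields $a_U=\shf{S}(U\subseteq X)\,a_X$ for every $U\in\col{T}$. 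Thus any cocycle is completely determined by its single component $a_X\in\shf{S}(X)$; this determination is the heart of the argument.

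With that in hand I would define the two mutually inverse maps. The projection $j:\ker d^0\to\shf{S}(X)$, $j(a)=a_X$, is the candidate isomorphism, and its proposed inverse is $i:\shf{S}(X)\to C^0(\col{T};\shf{S})$ sending a global section $s$ to the assignment with components $\shf{S}(U\subseteq X)\,s$ — the same induced assignment appearing in Proposition \ref{prop:global_to_approx}. Two routine checks remain. First, $i(s)$ really is a cocycle: both sides of the cocycle equation collapse, via the composition law of Axiom 5, to the common map $\shf{S}(U\cap V\subseteq X)\,s$, so the difference vanishes. Second, $i$ and $j$ are inverse in both orders: $j\circ i=\id$ is immediate from $\shf{S}(X\subseteq X)=\id$, while $i\circ j=\id$ is exactly the relation $a_U=\shf{S}(U\subseteq X)\,a_X$ established above. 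Linearity of both maps is automatic since all restrictions are linear under Axiom 7.

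I expect the only genuine subtlety, rather than a true obstacle, to be making explicit that the gluing half of Axiom 6 is doing no work here. Because the cover is $\col{T}$ and therefore already contains the top open set $X$, the would-be glued observation is literally one of the cochain coordinates $a_X$; no construction of observations over unions is needed, and the presheaf composition law alone drives the isomorphism. I would flag this explicitly so the reader does not mistake degree-zero cohomology for carrying more content than the mere conjunctivity constraint plus the presence of $X$ in the topology.
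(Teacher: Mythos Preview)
Your proposal is correct. The paper does not actually supply its own proof of this proposition; it simply cites \cite[Thm 4.3]{TSPbook} and moves on, so there is nothing in the text to compare your argument against. Your reasoning is the standard one and is sound: since the cover is $\col{T}$ itself and $X\in\col{T}$, specializing the cocycle identity to the pair $(U,X)$ forces $a_U=\shf{S}(U\subseteq X)\,a_X$, so projection onto the $X$-component is an isomorphism $\ker d^0\cong\shf{S}(X)$ with inverse given by restricting a global section to every open set. Your observation that the gluing clause of Axiom~6 is not invoked---only the presheaf composition law of Axiom~5 and the presence of $X$ in the cover---is correct and worth keeping.
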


Since the $C^0(\col{T};\shf{S})$ is also the space of assignments, Proposition \ref{prop:banach_fusion} implies that the solution to the Data Fusion Problem (Problem \ref{prob:fusion}) is merely a projection $C^0(\col{T};\shf{S}) \to H^0(\col{T};\shf{S})$.

Unfortunately, due to the way that $H^0(\col{T};\shf{S})$ is constructed, $\shf{S}(X)$ is already a factor in $C^0(\col{T};\shf{S})$.  This is remedied by the Leray theorem, which allows for a considerable reduction in complexity by verifying whether a smaller cover $\col{U}$ than the whole topology $\col{T}$ still recovers all of the cohomology spaces.

\begin{theorem} (Leray theorem, \cite{Leray_1950} \cite[Thm A7.2.6]{Hubbard}\cite[Thm 4.13 in III.4]{Bredon})
  \label{thm:leray}
  Suppose $\shf{S}$ is a sheaf on $\col{T}$, and that $\col{U}\subseteq \col{T}$ is a collection of open sets.   If for every intersection $U=U_0 \cap \dotsb U_k$ of a collection of elements in $\col{U}$ it follows that
  \begin{equation*}
    H^k(U \cap \col{T};\shf{S})=0 \text{ for all }k>0,
  \end{equation*}
  then we can conclude that $H^p(\col{U};\shf{S}) \cong H^p(\col{T};\shf{S})$ for all $p$.
\end{theorem}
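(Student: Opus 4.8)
The plan is to realize both Čech complexes as two edges of a single double complex built from the \emph{common refinement} of the covers, and to compare the two filtrations of its total complex. Since $X \in \col{T}$ and $\bigcup \col{U} = X$, both $\col{U}$ and $\col{T}$ are genuine open covers of $X$, so this is legitimate. Writing $\sigma$ for an increasing $(p{+}1)$-tuple drawn from $\col{U}$ with intersection $U_\sigma$, and $\tau$ for an increasing $(q{+}1)$-tuple drawn from $\col{T}$ with intersection $V_\tau$, I would set
\[
C^{p,q} = \prod_{\sigma,\tau} \shf{S}(U_\sigma \cap V_\tau),
\]
equipped with the horizontal differential $\delta_{\col{U}}$ (the $\col{U}$-coboundary of Definition \ref{def:cech_cohomology} in the $\sigma$ index) and the vertical differential $\delta_{\col{T}}$ (the $\col{T}$-coboundary in the $\tau$ index). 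These anticommute after the usual sign twist, yielding a first-quadrant double complex whose total complex $\mathrm{Tot}^n = \bigoplus_{p+q=n} C^{p,q}$ I can compute two ways.

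First I would take vertical ($\col{T}$-)cohomology. For a fixed $\sigma$, the corresponding column is exactly the \v{C}ech complex of the restricted cover $U_\sigma \cap \col{T}$ of the sheaf on $U_\sigma$; note $U_\sigma \in \col{T}$, since $\col{T}$ is closed under finite intersection, so this restricted cover is well defined. The hypothesis of the theorem states precisely that $H^q(U_\sigma \cap \col{T};\shf{S}) = 0$ for $q>0$, while Proposition \ref{prop:cohomology_global} (applied to $\shf{S}$ restricted to $U_\sigma$) identifies the $q=0$ term with the global sections $\shf{S}(U_\sigma)$. Hence this spectral sequence collapses onto the single surviving row $q=0$, which is visibly $C^\bullet(\col{U};\shf{S})$, and I obtain $H^n(\mathrm{Tot}) \cong H^n(\col{U};\shf{S})$.

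Next I would run the other filtration, taking horizontal ($\col{U}$-)cohomology first. For a fixed $\tau$, the relevant row is the \v{C}ech complex of the cover $\{U_\alpha \cap V_\tau\}_\alpha$ of the set $V_\tau$. If this is acyclic in positive degree with $H^0 = \shf{S}(V_\tau)$, the sequence collapses onto the column $p=0$, namely $C^\bullet(\col{T};\shf{S})$, forcing $H^n(\mathrm{Tot}) \cong H^n(\col{T};\shf{S})$; comparing the two computations of $H^n(\mathrm{Tot})$ then delivers the desired isomorphism. \textbf{The hard part is exactly this second acyclicity.} Unlike the first direction, it is \emph{not} supplied by hypothesis, because $V_\tau$ need not coincide with any cover element $U_\alpha$, so there is no ``cone point'' making the nerve contractible for free. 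My plan to close this gap is to exploit that $\col{T}$ is a finite (Alexandrov) topology: every open set possesses a basis of minimal open neighborhoods, and this combinatorial structure lets one write down an explicit contracting homotopy for $\{U_\alpha \cap V_\tau\}_\alpha$. Equivalently, and perhaps more cleanly, one invokes that on a finite topology the full-cover \v{C}ech cohomology $H^*(\col{T};\shf{S})$ already computes the derived-functor sheaf cohomology, so that the stated Čech hypothesis is precisely the assertion that $\col{U}$ is an \emph{acyclic cover} and the classical Leray acyclic-cover theorem applies verbatim. Reconciling the Čech-flavored hypothesis with genuine acyclicity is where the real content resides; once it is in hand, both collapses and the final comparison are routine bookkeeping on the double complex.
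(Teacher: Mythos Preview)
The paper does not prove this theorem; it is stated with citations to Leray, Hubbard, and Bredon and then used without further argument. There is therefore no in-paper proof to compare your proposal against.

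Your approach is the standard double-complex (Čech bicomplex) argument one finds in the cited references, and you have correctly located the only nontrivial point: the hypothesis supplies acyclicity for the columns indexed by $\col{U}$-simplices, but the collapse of the \emph{other} spectral sequence requires, for each $V_\tau$, that the Čech complex of $\{U_\alpha \cap V_\tau\}_\alpha$ be acyclic in positive degree with $H^0 = \shf{S}(V_\tau)$, which is not given. Of the two fixes you suggest, the second is the right one and is how the cited sources proceed: on a finite (Alexandrov) space the full-topology Čech cohomology $H^*(\col{T};\shf{S})$ agrees with derived-functor sheaf cohomology, so the stated hypothesis is exactly the classical acyclic-cover condition, and the Leray acyclic-cover theorem applies directly. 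Your first suggestion (an explicit contracting homotopy from the minimal-open-neighborhood structure) is not wrong in spirit but is underspecified as written; if you pursue it you will effectively be reproving that Čech equals derived-functor cohomology in this setting, so you may as well invoke that fact outright.
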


Practically speaking, the nontrivial elements of $H^k$ for $k>0$ represent obstructions to consistent sensor data.  They indicate situations in which it is impossible for an assignment to some open sets to be extended to the rest of the topology.  Reading from Definition \ref{def:cech_cohomology}, a nontrivial element of $H^k(\col{U};\shf{S})$ consists of a joint collection of observations on the $k$-way intersections of sensor domains in $\col{U}$ that are consistent upon further restriction to $(k+1)$-way intersections, but do not arise from any joint observations from $(k-1)$-way intersections.  They are therefore certain classes of partially self-consistent data that cannot be extended to global sections.  From a data fusion standpoint, they may be considered solutions to Problem \ref{prob:fusion} on only part of the space, but these solutions cannot be extended to all of the space without incurring distortion.

When situations with nontrivial $H^k$ for $k>0$ arise in practice they usually indicate a failure of the model to correspond to reality.  For instance, in all of the mosaic-forming examples discussed in this article, there has been an implicit assumption that all images were acquired at the same time.  When time evolves between imaging events, nontrivial $H^k$ elements indicate that inconsistency can easily arise, as Example \ref{eg:h1_generator} shows.

\begin{figure}
\begin{center}
\includegraphics[width=4in]{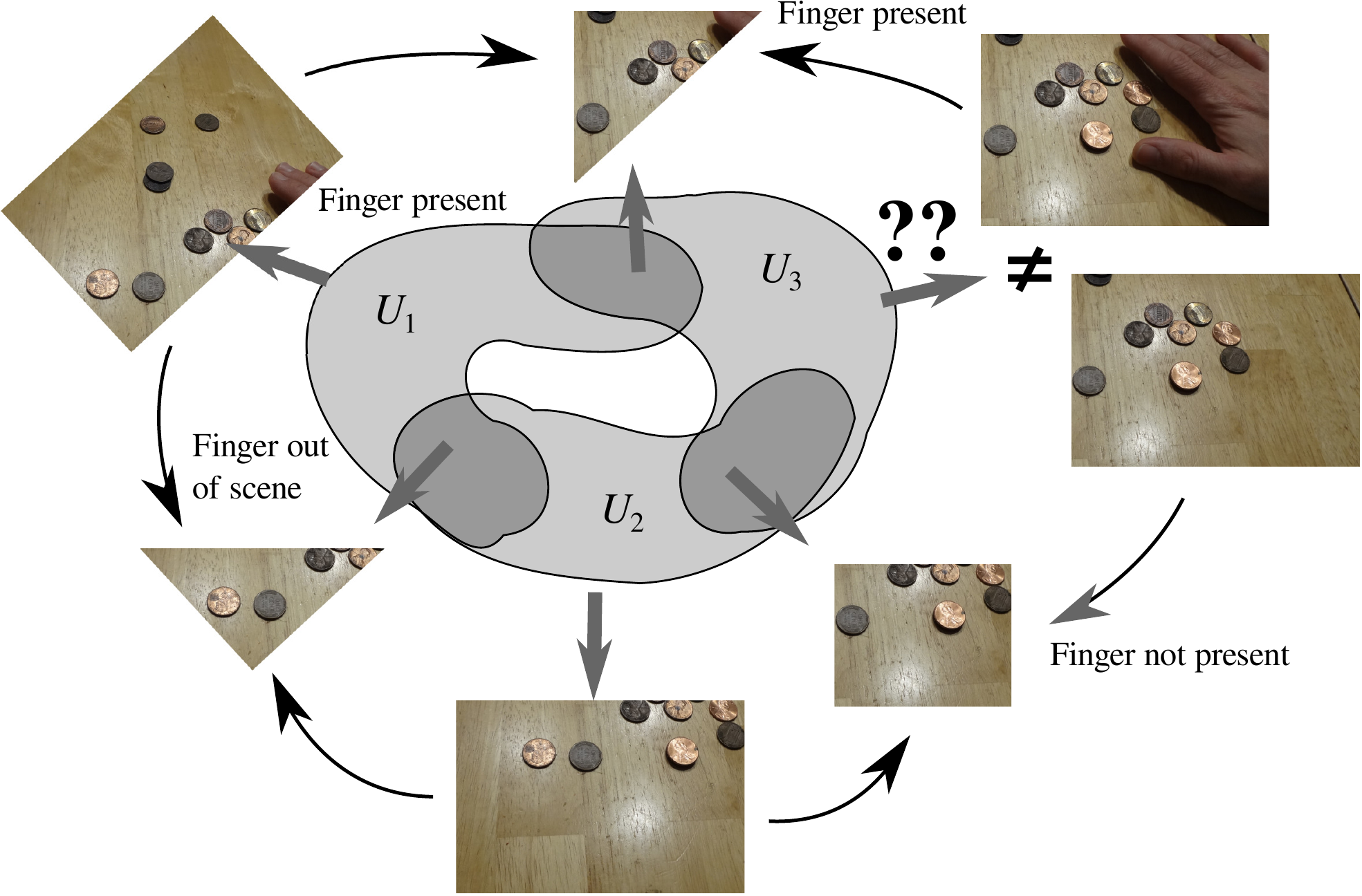}
\caption{A collection of images in a nontrivial $H^1$ cohomology class.}
\label{fig:h1_generator}
\end{center}
\end{figure}

\begin{example}
  \label{eg:h1_generator}
Consider the collection of photographs shown in Figure \ref{fig:h1_generator}.  Consider the sheaf $\shf{S}$ for the three sensor domains $\col{U}=\{U_i\}$ as shown, where the observations consist of various spaces of images and the restrictions consist of image cropping operations, as in Examples \ref{eg:cam_top} and \ref{eg:coins}.

Consider the two assignments consisting of the photograph on the left (on $U_1$) and the bottom ($U_2$) and one of the photographs on the right ($U_3$).  Neither of these assignments correspond to a global section, because one of the restriction maps out of $U_3$ will result in a disagreement on one of its intersections with the other two sets.  In particular, the set of photographs shown on the intersections could not possibly have arisen by restricting \emph{any} assignment of photographs to the elements of the cover, that is, the image through $d^1$ of an element of $C^0(\col{U}; \shf{S})$.  Therefore, the assignment of photographs to the intersections shown in Figure \ref{fig:h1_generator} is an example of a nontrivial class in $H^1(\col{T};\shf{S})$ since there are no nonempty intersections between all three sensor domains.
\end{example}

In light of Definition \ref{def:cech_cohomology} and the example, we may be tempted to add more sensors to our collection to try to resolve the ambiguity.  The essential fact is that without bringing time into the model of mosaic-forming, problems still can arise.  Under fairly general conditions, such as those in the above example, the collection of sensors contains all of the possible information as is truly encoded by the model.

\begin{figure}
\begin{center}
\includegraphics[width=3in]{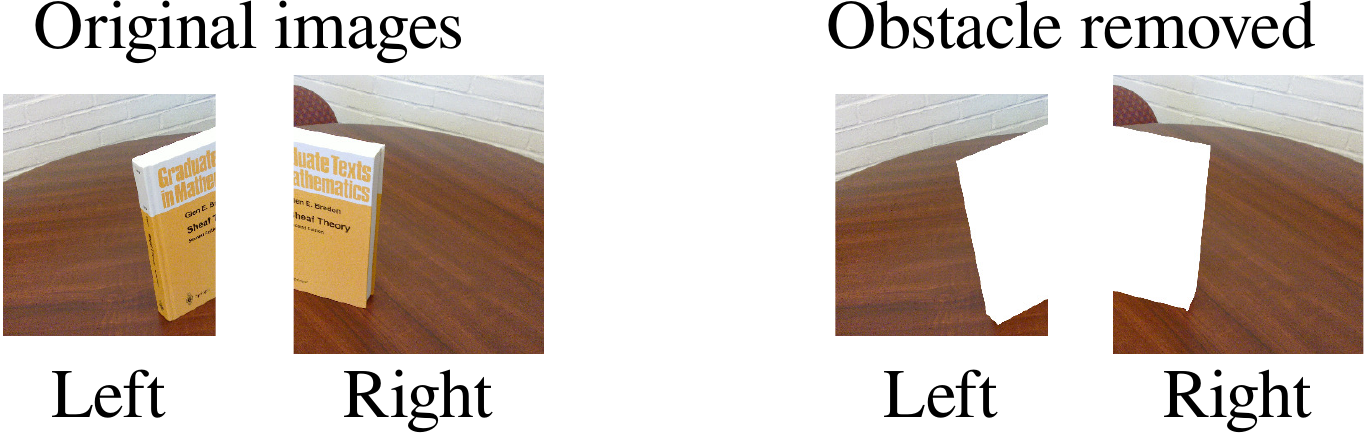}
\caption{Trying to see behind a book with two different views.}
\label{fig:obstacle}
\end{center}
\end{figure}

\begin{example}
  \label{eg:obstacle}
Consider the case of trying to use a collection of sensors to piece together what is behind an obstacle.  This occurs in optical systems as well as those based on radio or sound propagation.  Usually, the process involves taking collections of images from several angles followed by an obstacle-removal step.  If not enough looks are obtained from enough angles, then it will not be possible to infer anything from behind the obstacle.  However, if the sensors' locations are not well known, it may be impossible to detect that this has occurred.  This is an effect of a difference between the sensors' view of $H^1(\col{U};\shf{S})$ and the actual $H^1(\col{T};\shf{S})$.  

Figure \ref{fig:obstacle} shows the process for trying to see behind a book using two views.  The original photographs on the left show the scene, while the photos on the right show the obstacle removed.  Suppose that $X$ consists of the following entities: $L$ for the left camera, $R$ for the right camera, $V_1$ for the upper overlap, and $V_2$ for the lower overlap.  That is, 
\begin{equation*}
X=\{R,L,V_1,V_2\}.
\end{equation*}
There is a natural topology for the sensors, namely
\begin{equation*}
  \col{T}=\{\{R,V_1,V_2\},\{L,V_1,V_2\},\{V_1,V_2\},\{V_1\},\{V_2\},X,\emptyset\}
\end{equation*}
in which the overlaps are separated.  Modeling the cameras as $\col{U} = \{U_L,U_R\}$ where $U_L = \{L,V_1,V_2\}$, $U_R = \{R,V_1,V_2\}$, we obtain the following sheaf $\shf{M}$ associated to the topology generated by $\col{U}$:
\begin{equation*}
  \xymatrix{
&X&    &&\mathbb{R}^{3m}\oplus\mathbb{R}^{3n}\ar[dr]\ar[dl]&\\
U_L\ar[ur]&\{V_1,V_2\}\ar[r]\ar[l]&U_R\ar[ul]    &\mathbb{R}^{3m} \ar[r] & \mathbb{R}^{3p}\oplus\mathbb{R}^{3q} & \mathbb{R}^{3n} \ar[l]
    }
\end{equation*}
where $m$ is the number of pixels in the left image, $n$ is the number of pixels in the right image, $p$ is the number of pixels in the upper overlap region, and $q$ is the number of pixels in the lower overlap region.  Since the overlap region consists of two connected components, does this impact the mosaic process?  It does not, according to Theorem \ref{thm:leray}, though this requires a little calculation.  It is clear the only place we need to be concerned is the overlap.  So we need to examine the intersection $U_L \cap U_R = \{V_1,V_2\}$ and compute
\begin{equation*}
  H^k(\col{V};\shf{M}),
\end{equation*}
where $\col{V} = \{\{V_1,V_2\},\{V_1\},\{V_2\}\}$.  The sheaf diagram for this (with the sets of $\col{V}$ below for clarity) is
\begin{equation*}
  \xymatrix{
    \mathbb{R}^{3p} & \mathbb{R}^{3p}\oplus\mathbb{R}^{3q} \ar[l]^{\pr_1}\ar[r]^{\pr_2} & \mathbb{R}^{3q}\\
    \{V_1\} \ar[r]& \{V_1,V_2\} & \{V_2\} \ar[l]\\
    }
\end{equation*}
which is easily computed to have $\dim H^k(\col{V};\shf{M}) = 0$ for $k>0$.  (In the diagram, $\pr_k$ is the projection onto the $k$-th factor of a product and $\id$ is the identity map.)
\end{example}

The above example shows that solving a data fusion problem applied to relatively unprocessed data works well.\footnote{It works well up to the limit of having enough agreement between the sensors to deduce the overlaps, of course.  This is not necessarily easy!}  It also works if data are processed a bit more, as the next (and final) example shows.

\begin{figure}
  \begin{center}
    \includegraphics[width=3in]{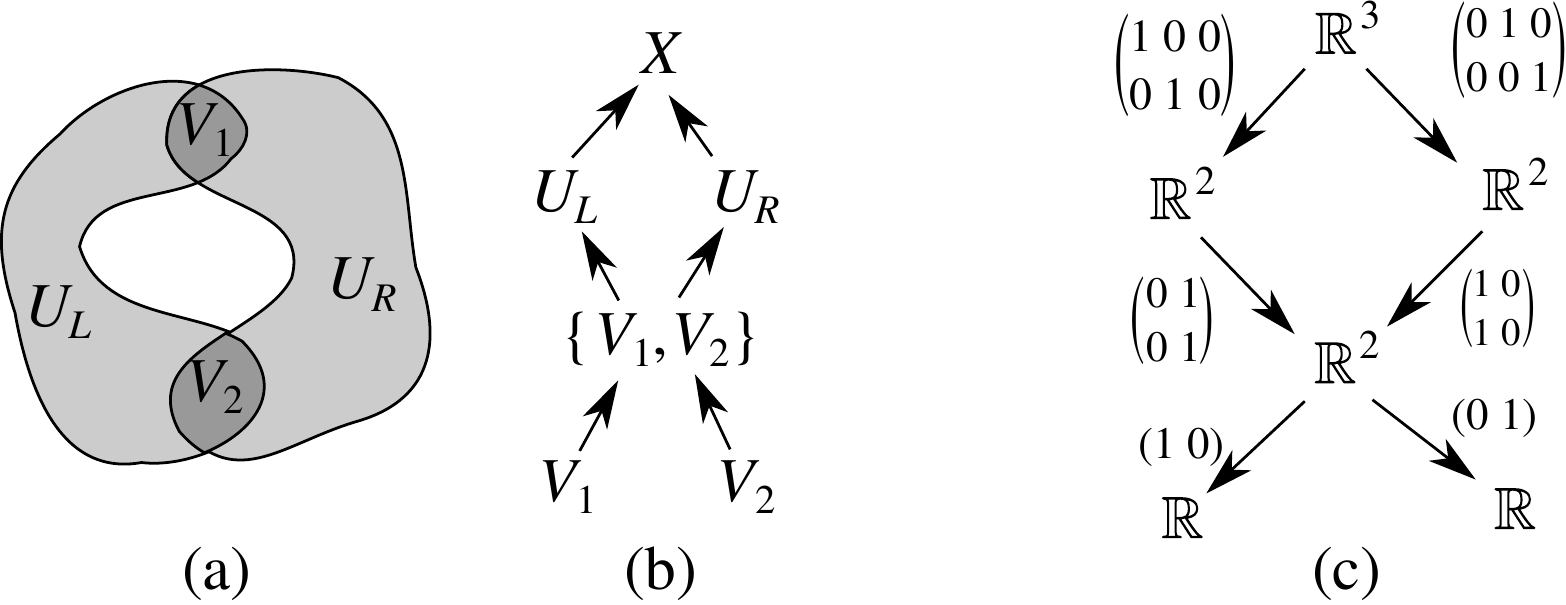}
    \caption{The sheaf for Example \ref{eg:obstacle_sheaf}.  (a) A representation of the topological space $\col{T}$ in Examples \ref{eg:obstacle} and \ref{eg:obstacle_sheaf}.  (b) The open sets in the topology. (c) The sheaf $\shf{P}$ and its restrictions.}
    \label{fig:obstacle_sheaf}
  \end{center}
\end{figure}

\begin{example}
  \label{eg:obstacle_sheaf}
  Suppose instead that we are interested not in the images themselves but rather whether a single object is in the scene.  Like the previous example the scene can be divided into several regions, each represented by an entity with the topology $\col{T}$.  If we attempt to quantify the probability that the object is in the left, right, or middle of the scene, the space of observations over $U_L$ (or $U_R$) is $\mathbb{R}^2$, whose two components should be interpreted as the probability that an object is present on the left (or right) and in the overlap.  We are led to consider a sheaf $\shf{P}$ on $\col{T}$ that keeps track of whether there is an object on the left, right, or middle of the scene as shown in Figure \ref{fig:obstacle_sheaf}(c).  As before, if $\col{U} = \{U_L,U_R\}$ where $U_L = \{L,V_1,V_2\}$, $U_R = \{R,V_1,V_2\}$, then $\dim H^0(\col{U};\shf{P}) = 3$, indicating the three probability values for the left, right, and middle of the scene.  However, unlike the Example \ref{eg:obstacle}, $\dim H^1(\col{U};\shf{P})=1$, which indicates that there can be data that are locally self-consistent, but not globally self-consistent.  This corresponds to the situation where there are different probabilities for the object's presence in $V_1$ and $V_2$ -- both variants of the middle region -- yet the global model only discerns a single probability for the middle region.

  We can confirm that this is not an artifact by examining the refined cover  $\col{V} = \{\{V_1,V_2\},\{V_1\},\{V_2\}\}$ as in the previous example, and are once again led to the fact that $\dim H^k(\col{V};\shf{P})=0$ for $k>0$.
\end{example}

The presence of nontrivial cohomology in $H^1$ arising after performing some data processing indicates that the processing has lost information at a global level, but that this information is still possible to detect with individual sensors.  This strongly advocates for performing fusion as close to the sensors as possible, to avoid unexpected correlations across sensor domains due to inconsistent processing.  This view is shared by others, for instance \cite{Newman_2013}, though in some cases the artifacts can be anticipated.  For instance, \cite{basu2003computing} shows how to use spectral sequences to compensate for when the hypotheses of Theorem \ref{thm:leray} are known to fail.  As a special case for graphs, \cite{RobinsonQGTopo} and \cite[Sec 4.7]{TSPbook} contain an algorithmic approach that is somewhat simpler.

\section{Conclusions and new frontiers}

This article has demonstrated that sheaves are an effective theoretical tool for constructing and exploiting integrated systems of sensors in a bottom-up, data-driven way.  By constructing sheaf models of integrated sensor systems, one can use them as a basis for solving data fusion problems driven by sensor data.  One can also apply cohomology as a tool to identify features of the integrated system as a whole, which could be useful for sensor deployment planning.  

There are several mathematical frontiers that are opened by repurposing sheaf theory for information integration.  For instance, although we have focused on global sections and obstructions to them, the combinatorics of \emph{local} sections is essentially not studied.  The extension of observations to global sections arises in many important problems.  In homogeneous data situations, such as the reconstruction of signals from samples \cite{RobinsonSampTA2013} and the solution of differential equations \cite{Ehrenpreis_1956}, sheaves are already useful.  The generality afforded by sheaves indicates that problems of this sort can also be addressed when the data types are not homogeneous.  Although persistent homology has received considerable recent attention in exploratory data analysis, persistent cohomology \cite{DeSilva_2011} has shown promise in signal processing and is a sheaf \cite{TSPbook}.

Although the examples in this article were large enough to be useful on their own, there is also considerable work to be done to address larger collections of sensors.  Although solving the data fusion problem is straightforward computationally, computing cohomology can require significant computing resources.  In homology calculations, the notion of collapses and reductions are essential for making computation practical.  A similar methodology -- based on discrete Morse theory \cite{Nanda_2015} -- exists for computing sheaf cohomology.  In either case (reduced or not), efficient finite field linear algebra could be an enabler. Finally, practical categorification \cite{Purvine_JMM2016} is still very much an art, and has a large impact on practical performance of sheaf-based information integration systems that rely on cohomology.

\section*{Acknowledgment}
  The author would like to thank the anonymous referees for their insightful comments and questions.  This work was partially supported under DARPA SIMPLEX N66001-15-C-4040.   The views, opinions, and/or findings expressed are those of the authors and should not be interpreted as representing the official views or policies of the Department of Defense or the U.S. Government.

\bibliographystyle{ieeetr}
\bibliography{sheafcanon_bib}

\end{document}